\DeclarePairedDelimiter\floor{\lfloor}{\rfloor}
 \newtheorem{theorem}{Theorem}[section]
 \newtheorem{lemma}[theorem]{Lemma}
 \newtheorem{proposition}[theorem]{Proposition}
 \theoremstyle{definition}
 \theoremstyle{remark}
 \numberwithin{equation}{section}
\begin{document}
\newcommand{\onehalf}{\tfrac{1}{2}}
\newcommand{\norm}[1] {\| #1 \|}
\newcommand{\lrnorm}[1]{\left\| #1 \right\|}
\newcommand{\bignorm}[1]{\bigl\| #1 \bigr\|}
\newcommand{\Bignorm}[1]{\Bigl\| #1 \Bigr\|}
\newcommand{\Biggnorm}[1]{\Biggl\| #1 \Biggr\|}
\newcommand{\biggnorm}[1]{\biggl\| #1 \biggr\|}
\newcommand{\mathfrakA}{\mathpalette\bigmathfrakA\relax}
\newcommand{\bigmathfrakA}[2]{\scalebox{1.4}{$#1\mathfrak{a}$}}
\newcommand{\calA}  {\mathfrakA}
\newcommand{\NN} {\mathbb N}
\newcommand{\ZZ} {\mathbb Z}
\newcommand{\CC} {\mathbb C}
\newcommand{\RR} {\mathbb R}
\newcommand{\QQ} {\mathbb Q}
\newcommand{\dx} {{\,\mathrm{d}x}}
\newcommand{\dy} {{\,\mathrm{d}y}}
\newcommand{\du} {{\,\mathrm{d}u}}
\newcommand{\dr} {{\,\mathrm{d}r}}
\newcommand{\ds} {{\,\mathrm{d}s}}
\newcommand{\dt} {{\,\mathrm{d}t}}
\newcommand{\dxi} {{\,\mathrm{d}\xi}}
\newcommand{\calD}{{\mathscr D}}
\newcommand{\calH}{{\mathcal H}}
\newcommand{\DOMAIN}{\mathscr D} 
\newcommand{\BOUNDED}{\mathscr L}
\newcommand{\dual}[2]{ \left\langle #1, #2 \right\rangle}
\newcommand{\sprod}[2]{ \left[  #1 , #2 \right]}
\newcommand{\VECT}[2]{ \begin{pmatrix}#1\\#2\end{pmatrix}}
\newcommand{\LANDAUo}{\mathcal o}
\newcommand{\LANDAUO}{\mathcal O}
\newcommand{\cals}{\mathcal s}
\newcommand{\Ce}{{\mathrm C}}
\newcommand{\SUCHTHAT}{:\;}
\newcommand{\EVOLUTION}{U(t, s)_{0\le s\le t}}
\newcommand{\DUALEVOLUTION}{U_{-1}^*(t, s)_{0\le s\le t}}

\setlength{\parindent}{0cm}

\title{Exact observability of a 1D wave on a non-cylindrical domain } 

\author{Bernhard Haak} 
\address{IMB // Univ. Bordeaux // 351 cours de la Liberation // 33405
  Talence} 
\thanks{The first named author is partially supported by
  ANR project  ANR-12-BS01-0013 'Harmonic Analysis at its Boundaries'.}  

\author{Duc-Trung Hoang} 

\thanks{The second named author kindly acknowledges the financial
  support of his PhD thesis at Bordeaux University.}

\date{\today} 
\allowdisplaybreaks

\begin{abstract}
  We discuss admissibility and exact observability estimates of
  boundary observation and interior point observation of a
  one-dimensional wave equation on a time dependent domain for
  sufficiently regular boundary functions. We also discuss moving
  observers inside the non-cylindrical domain and simultaneous
  observability results.
\end{abstract}

\maketitle

\section{Introduction and main results}

\bigskip

\begin{minipage}{0.65\linewidth}
In this article we are concerned with exact observability
of the 1D wave equation on a domain with time-dependent boundary. 
To be precise, let  $s : \RR_+ \to (0, \infty)$  and let
\[
\Omega = \bigl\{ (x, t)\in \RR^2 :\quad   t \ge 0 \;\text{and}\; 0 \le x \le s(t) \bigr\},
\]
Where $s(0)=1$ and $\norm{ s'(t) }_{L_\infty(\RR)} < 1$. The last
condition ensures amongst other things that the characteristic
emerging from the origin hits the boundary in finite time.
Let $f \in L_2([0,1])$ and $g \in H_0^1([0,1])$ be initial values.  We
consider a wave equation on $\Omega$ with Dirichlet boundary
conditions
\begin{equation}
  \label{eq:1D-wave-eq-linear-bdry}\tag{W.Eq}
  \left\{
    \begin{array}{ll}
    u_{tt} - u_{xx} = 0  \qquad                        & (x, t) \in \Omega\\
    u(0, t) = u(s(t), t) = 0 \qquad     & t \ge 0\\
    u(x,0) = g(x)              \qquad                & x \in [0,1]\\
    u_t(x,0) = f(x)            \qquad                & x \in [0,1]
    \end{array}
  \right.
\end{equation}
\end{minipage}\hspace*{0.5cm}
\begin{minipage}{0.3\linewidth}
  \begin{tikzpicture}
  \draw[->] (-.5,0) -- (3,0) ;    \path (2.5, -.2) node {$x$} ; 
  \draw[->] (0,-.5) -- (0,5) ;    \path (-.2, 4) node {$t$} ; 
  \path (1, -.2) node {$1$} ;
  \draw (1, 0) to[out=70,in=270] (2,2)  to[out=90,in=290] (1.8, 3) to[out=110,in=270] (2.5,5) ; 
  \path (1, 2) node {$\Omega$} ;
  \path (2.8, 4) node {$x=s(t)$} ;
\end{tikzpicture}
\end{minipage}

\subsection{Existence of solutions}

There are several natural approaches to
(\ref{eq:1D-wave-eq-linear-bdry}).  One may for example transform the
domain $\Omega$ to a cylindrical domain. Instead, seeking a natural
and more simple approach, we try to develop the solution $u$ into a
series of the form
\begin{equation} \label{eq:solution-as-series} 
  u(x,t) := \sum_{n \in \ZZ}  A_n  \Bigl(e^{2 \pi i n \; \varphi(t{+}x))}  - e^{2\pi i n \; \varphi(t{-}x))}\Bigr)
\end{equation}
where the coefficients $A_n$ are given by the initial data $(g, f)$.
This approach has almost a century of history, dating back to Nicolai
\cite{Nicolai} in the case of a linear moving boundary
$s(t)=1+\varepsilon t$ and Moore \cite{Moore} for general boundary
curves (however only asymptotic developments for $\varphi$ are given).
We refer to Donodov \cite{Dodonov} for a large number of references.
In order to satisfy the Dirichlet boundary condition, we need a
solution $\varphi$ to the functional equation
\begin{equation}
  \label{eq:functional-equation}
    \varphi( t + s(t) )   - \varphi( t - s(t) ) = 1.
\end{equation}
Because of the importance of this functional equation we fix the
notation $\alpha(t) := t + s(t)$ and $\beta(t) := t - s(t)$ and mention
that both are strictly increasing bijections from $\RR_+$ to
$[\pm s(0), \infty)$, respectively.  We will also consider
$\gamma = \alpha \circ \beta^{-1}: [-s(0), \infty) \to [s(0),
\infty)$.
Most solutions to (\ref{eq:functional-equation}) are useless for our
purposes\footnote{It is indeed easy to construct solutions depending
  on an arbitrary function by using the axiom of choice}. On the other
hand side, under reasonable assumptions on the boundary function,
differentiable solutions to (\ref{eq:functional-equation}) are unique,
at least up to an additive constant. This is of course what we look
for.  In some easy cases a differentiable solution $\varphi$ can be
found by calculus, see the following table for some examples. We refer
to a detailed discussion on the general situation in the
appendix~\ref{sec:appendix}.

\medskip
\begin{tabular}{llll}
  Name  & Boundary function &  &  Solution to (\ref{eq:functional-equation}) \\
\hline\\ 
  linear moving boundary 
 & $s(t) = 1+\varepsilon t$ & $\varepsilon \in (0, 1)$ 
 & $\varphi(t) =  \ln(\tfrac{1+\varepsilon}{1-\varepsilon})^{-1} \, \ln( 1 {+} \varepsilon t )$ \\ 
  parabolic boundary  
 & $s(t) = \sqrt{1+\varepsilon t}$ & $\varepsilon \in (0,2)$ 
 & $\varphi(t) = \tfrac1{2\varepsilon} \sqrt{\varepsilon^2 + 4 \varepsilon t + 4}$\\
  hyperbolic boundary
 &  $s(t) = \tfrac{1}{\varepsilon}(-1 + \sqrt{1{+}(1{+}\varepsilon t)^2})$ &  $\varepsilon >0$ 
 & $\varphi(t) =  \tfrac{\varepsilon t}{1+\varepsilon t}$ \\
  shrinking domain 
 & $s(t) = \frac1{1+\varepsilon t}$ &  $\varepsilon \in (0, 1)$ 
 &  $\varphi(t) =\tfrac{\varepsilon}4 (t+\tfrac1\varepsilon)^2$ .
\end{tabular}
\medskip

For simplicity of notation, we shall always assume $s(0)=1$ ; in case
of hyperbolic boundaries some straight-forward modifications have to
be made.  The common denominator of these examples is the following:
$\varphi \in \Ce^2([-1, \infty))$ and $\varphi'(t) > 0$ for all
$t \ge -1$. We call $s$ an {\bf admissible boundary function} if
(\ref{eq:functional-equation}) admits such a solution $\varphi$.
  
\begin{proposition}\label{prop:proof-of-series-formula}
  Let $s$ be an admissible boundary function and assume the initial
  data $f, g \in \calD((0,1))$. Then $(g, f)$ determine uniquely a
  sequence $(A_n)_{n\in \ZZ} \in \ell_2$ such that for $t \ge 0$ and
  $0 \le x \le s(t)$, the function (\ref{eq:solution-as-series}) is
  the solution of the moving boundary wave equation {\rm
    (\ref{eq:1D-wave-eq-linear-bdry})}.
\end{proposition}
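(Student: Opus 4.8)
\emph{Proof plan.} The plan is to exploit the d'Alembert structure hidden in (\ref{eq:solution-as-series}). Writing $\Phi(\xi) := \sum_{n\in\ZZ} A_n e^{2\pi i n\varphi(\xi)}$, the ansatz reads $u(x,t) = \Phi(t+x) - \Phi(t-x)$, which is a classical solution of $u_{tt}-u_{xx}=0$ as soon as $\Phi \in \Ce^2$, and which satisfies the left boundary condition $u(0,t)=\Phi(t)-\Phi(t)=0$ for free. For the right boundary condition I would compute $u(s(t),t) = \Phi(\alpha(t)) - \Phi(\beta(t))$ and observe that the functional equation (\ref{eq:functional-equation}) in the form $\varphi(\alpha(t)) = \varphi(\beta(t)) + 1$ together with $e^{2\pi i n} = 1$ forces $e^{2\pi i n \varphi(\alpha(t))} = e^{2\pi i n\varphi(\beta(t))}$ termwise; hence $\Phi(\alpha(t)) = \Phi(\beta(t))$ and the Dirichlet condition on $x=s(t)$ holds automatically, for \emph{every} choice of $(A_n)$. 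This is precisely the reason for the particular shape of (\ref{eq:solution-as-series}): the boundary conditions are built in, and only the initial data remain to be matched.

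It then remains to choose $(A_n)$ so that $u(x,0)=g(x)$ and $u_t(x,0)=f(x)$ on $[0,1]$. These two identities read $\Phi(x)-\Phi(-x)=g(x)$ and, after integrating $\Phi'(x)-\Phi'(-x)=f(x)$, $\Phi(x)+\Phi(-x) = 2\Phi(0) + \int_0^x f$. Solving this linear system determines $\Phi$ on $[-1,1]$ explicitly up to the additive constant $\Phi(0)$, namely $\Phi(\pm x) = \Phi(0) \pm \tfrac12 g(x) + \tfrac12\int_0^x f$ for $x\in[0,1]$. The additive constant is immaterial, since it cancels in $\Phi(t+x)-\Phi(t-x)$ (equivalently, the $n=0$ term of the series vanishes identically).

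The bridge to the coefficients is the observation that $s(0)=1$ gives $\beta(0)=-1$, hence $\gamma(-1) = \alpha(\beta^{-1}(-1)) = \alpha(0) = 1$; thus $\varphi$ maps the interval $[-1,1]$ bijectively onto an interval of length exactly $1$, a fundamental domain for the relation $\varphi(\gamma(\cdot)) = \varphi(\cdot)+1$. I would therefore pass to the variable $\theta = \varphi(\xi)$ and read off $(A_n)$ as the period-one Fourier coefficients of $\theta \mapsto \Phi(\varphi^{-1}(\theta))$. Uniqueness of Fourier coefficients gives uniqueness of $(A_n)$ (once $\Phi(0)$ is normalised), and Parseval places them in $\ell_2$.

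The point requiring care, and the one I would treat as the main obstacle, is regularity at the seam $\xi=\pm1$: the series produces the period-one periodic extension of $\Phi\circ\varphi^{-1}$, so I must check this extension is genuinely smooth in order to recover a classical $\Ce^2$ solution and rapid (hence $\ell_2$) decay of $(A_n)$. Because $f,g\in\calD((0,1))$ vanish near the endpoints, the explicit formulas show $\Phi$ is constant near both $\xi=0$ and $\xi=\pm1$ and satisfies $\Phi(1)=\Phi(-1)$ (using $g(1)=0$); combined with $\varphi\in\Ce^2$ this makes $\Phi\circ\varphi^{-1}$ a smooth $1$-periodic function, so its Fourier coefficients decay faster than any polynomial and the series for $u$ together with its first two derivatives converges uniformly on $\Omega$. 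Finally I would invoke uniqueness for the initial/boundary value problem to conclude that this $u$ is \emph{the} solution.
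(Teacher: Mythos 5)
Your proposal is correct and follows essentially the same route as the paper: your $\Phi$ is (up to the immaterial additive constant $\Phi(0)$) exactly the paper's function $h(x)=\tfrac12(g(x)+F(x))$ on $[0,1]$, $\tfrac12(-g(-x)+F(-x))$ on $[-1,0)$, your change of variables $\theta=\varphi(\xi)$ is precisely the paper's Lemma~\ref{lem:ONS}, and the use of $f,g\in\calD((0,1))$ to get a smooth periodic extension, term-by-term differentiability and the boundary check via (\ref{eq:functional-equation}) all match. Your explicit remarks on the $A_0$ ambiguity and on $\Phi$ being constant near the seam are slightly more careful than the paper's, but do not change the argument.
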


We start the proof with the following trivial observation.
\begin{lemma}\label{lem:ONS}
  For fixed $t_0 \geq 0$, the family $\{e^{2\pi i n \, \varphi(x)} : n \in \ZZ\}$,
  is a complete orthonormal system in $H := L_2([t_0{-}s(t_0), t_0{+}s(t_0)], \varphi'(x)\dx)$.
\end{lemma}

For $t_0{=}0$, we obtain as a particular case that the family $(b_n)$
with $b_n(x) = e^{2\pi i n \, \varphi(x)}$ is an orthonormal basis in
$H := L_2([-1,1], \varphi'(x)\dx)$. Since there is $C>0$ such that
$\tfrac1C \le \varphi'(x) \le C$ on $[0,1]$, we have
$L_2([-1,1], \varphi'(x)\dx) = L_2([-1,1], \dx)$ as sets with
equivalent respective norms\footnote{In particular, $(b_n)$ is a Riesz basis in
$ L_2([-1,1])$.}. 

\begin{proof} [Proof of Proposition~\ref{prop:proof-of-series-formula} ]
We let $F(x) = \int_{0}^{x}f(s)\ds$ and
\[
h(x) := \tfrac{1}{2} \cdot \left\{
  \begin{array}{rcllr}
    g(x)  & +  &F(x)  \hspace*{1cm}  & \text{ for } & 0  \le  x \le 1 \\
   -g(-x) & +  &F(-x)                & \text{ for } &  -1  \le  x < 0
  \end{array}
\right.
\]
By assumption, $h\in H$ that we develop into the orthonormal basis:
$h = \sum_{\ZZ} \; \langle h, b_n \rangle\, b_n $. We shall always note
\begin{equation}  \label{eq:def-of-An}
 A_n = \langle h, b_n \rangle = \int_{-1}^{1} h(x) e^{2\pi i n \, \varphi(x)}\varphi'(x) \dx  
\end{equation}
Since $g(0){=}g(1){=}0$, we have $h(1){=}h(-1)$ so that
$h \in H_0^1([-1,1])$. Hence the sequences $(A_n)$ and $(n\,A_n)$ are
square-summable.  Taking sum and difference, we may develop $g$ and
$F$ as follows:
\[ 
F(x)  =  \sum_{n \in \ZZ} A_n\Bigl(e^{\frac{i\pi n}{\eta_\varepsilon}\varphi(x)} + e^{\frac{i\pi n}{\eta_\varepsilon}\varphi(-x)}\Bigr),
\qquad\qquad x\in [0,1]
\]
and
\[
g(x) = \sum_{n \in \ZZ} A_n \Bigl(e^{\frac{i\pi n}{\eta_\varepsilon}\varphi(x)} - e^{\frac{i\pi n}{\eta_\varepsilon}\varphi(-x)}\Bigr),
\qquad\qquad x\in [0,1].
\] 
  Since we suppose  $f , g \in \calD((0,1))$, $h$
  satisfies the periodicity condition
  $h^{(\alpha)}(-1){=}h^{(\alpha)}(1)$ for all derivative orders
  $\alpha\ge 0$.  As a consequence, the series of $F$, $g$ and $h$
  above may be differentiated term by term. We let
\[
  u(x,t) := \sum_{n \in \ZZ}  A_n  \Bigl(e^{2 \pi i n \; \varphi(t{+}x))}  - e^{2\pi i n \; \varphi(t{-}x))}\Bigr)
\]
Since $\varphi \in \Ce^2([-1, \infty))$, $u$ is twice differentiable
and with respect to $x$ and $t$. Moreover, partial derivatives can be
calculated term by term. As an immediate consequence,
$u_{xx} - u_{tt} = 0$ in the interior domain $\Omega^\circ$. Moreover, $u$ satisfies the
Dirichlet condition since for $x=0$
\[
   u(0,t) = \sum_{n \in \ZZ} A_n \Bigl(e^{2 \pi i n \; \varphi(t))}  - e^{2\pi i n \; \varphi(t))} \Bigr) = 0
\]
whereas for $x=s(t)$, thanks to the functional equation (\ref{eq:functional-equation}), 
\begin{align*}
u( s(t) , t) 
= & \; \sum_{n \in \ZZ} A_n \Bigl( e^{2 \pi i n \; \varphi(t+s(t))}  - e^{2\pi i n \; \varphi(t-s(t))} \Bigr) \\
= & \; \sum_{n \in \ZZ} A_n  e^{2 \pi i n \; \varphi(t+s(t))}  \bigl(1 - e^{2\pi i n}  \bigr) = 0. \qedhere
\end{align*}
\end{proof}

The series representation of the solution is the key to obtain
explicit and precise constants for admissibility and exact
observability in different situations, since they can be played back
to classical Fourier analysis.

Let us fix some often appearing constants:
\begin{equation}  \label{eq:def-of-constants-c-and-C}
  \begin{split}
   m(t) = & \; \min\{ \varphi'(x)\SUCHTHAT  x \in [t-s(t), t+s(t)] \} \qquad \text{and}\qquad \\
   M(t) = & \; \max\{ \varphi'(x)\SUCHTHAT  x \in [t-s(t), t+s(t)] \}.
  \end{split}
\end{equation}
Since on $[0,1]$, $m(0) \le \varphi'(x) \le M(0)$, we may use the
unweighted Poincaré inequality on $[0,1]$ to show that
  \begin{equation}
    \label{eq:norm-def}
\bignorm{ (g,f)}_{H_0^1([0,1];\frac{\dx}{\varphi'(x)}) \times L_2 ([0,1];\frac{\dx}{\varphi'(x)})}^2 
:= \bignorm{\nabla g}_{L_2([0,1];\frac{\dx}{\varphi'(x)})}^2 +  \bignorm{f}_{L_2([0,1];\frac{\dx}{\varphi'(x)})}^2.
\end{equation}
is an equivalent to
$\norm{g}_{L_2 ([0,1];\frac{\dx}{\varphi'(x)})}^2 + \norm{g'}_{L_2
  ([0,1];\frac{\dx}{\varphi'(x)})}^2 + \norm{f}_{L_2
  ([0,1];\frac{\dx}{\varphi'(x)})}^2$.
The notation
\[
\norm{ (g, f)}_{H_0^1 \times L_2}^2 := \norm{ g' }_{L_2(0,1)}^2 +
\norm{ f }_{L_2(0,1)}^2
\]
(without specifying intervals or weights) always refers to the
unweighted norms on $[0,s(0)] = [0,1]$.

\begin{proposition}\label{prop:norm-equivalence-f-g}
  We have the
  following estimate
\[ 
                 8\pi^2 m(0) \; \sum_{n\in \ZZ}n^2|A_n|^2 
\quad \leq \quad \norm{(g,f)}_{H_0^1 \times L_2 }^2 
\quad \leq \quad 8\pi^2 M(0)  \; \sum_{n\in \ZZ}n^2|A_n|^2,
\]
where the constants are given by (\ref{eq:def-of-constants-c-and-C}).
\end{proposition}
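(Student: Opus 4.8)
The plan is to rewrite $\sum_n n^2|A_n|^2$ as a weighted Dirichlet integral of $h$ and then to obtain both inequalities by trapping the weight $\varphi'$ between its extrema $m(0)$ and $M(0)$ on $[-1,1]$.

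First I would convert the $A_n$ into honest Fourier coefficients. The change of variables $y=\varphi(x)$ in (\ref{eq:def-of-An}), together with $\tilde h:=h\circ\varphi^{-1}$, turns $A_n$ into $\int_{\varphi(-1)}^{\varphi(1)}\tilde h(y)\,e^{2\pi i n y}\dy$; by the functional equation (\ref{eq:functional-equation}) at $t=0$ one has $\varphi(1)-\varphi(-1)=1$, so $(e^{2\pi i n y})_{n}$ is precisely the standard orthonormal basis of $L_2$ on an interval of length one and the $A_n$ are the usual Fourier coefficients of $\tilde h$ (the sign in the exponent only reindexes $n\mapsto-n$ and is immaterial below). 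Because $g(0)=g(1)=0$ forces $h(1)=h(-1)$, the function $\tilde h$ is periodic and lies in $H^1$; hence $\tilde h'$ has Fourier coefficients $2\pi i n A_n$, and Parseval gives $4\pi^2\sum_n n^2|A_n|^2=\norm{\tilde h'}_{L_2}^2$. Undoing the substitution via $\tilde h'(y)=h'(x)/\varphi'(x)$ at $x=\varphi^{-1}(y)$ then yields the clean identity
\[
4\pi^2\sum_{n\in\ZZ}n^2|A_n|^2=\int_{-1}^{1}\frac{|h'(x)|^2}{\varphi'(x)}\dx.
\]

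Next I would differentiate $h$ piecewise: $h'(x)=\tfrac12(g'(x)+f(x))$ on $[0,1]$ and, via the chain rule, $h'(x)=\tfrac12(g'(-x)-f(-x))$ on $[-1,0)$. Splitting the integral at $0$ and substituting $x\mapsto -x$ on the negative part reduces it to $\tfrac14\int_0^1\bigl(|g'+f|^2/\varphi'(x)+|g'-f|^2/\varphi'(-x)\bigr)\dx$. The parallelogram identity $|g'+f|^2+|g'-f|^2=2|g'|^2+2|f|^2$ collapses the numerators; bounding $1/\varphi'(x)$ and $1/\varphi'(-x)$ by $1/m(0)$ from above and by $1/M(0)$ from below (both $x$ and $-x$ lie in $[-1,1]=[-s(0),s(0)]$) then sandwiches the integral as
\[
\frac{1}{2M(0)}\norm{(g,f)}_{H_0^1\times L_2}^2\;\le\;4\pi^2\sum_{n\in\ZZ}n^2|A_n|^2\;\le\;\frac{1}{2m(0)}\norm{(g,f)}_{H_0^1\times L_2}^2,
\]
and clearing denominators gives exactly the asserted estimate.

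The step I expect to require the most care is this last reflection: the two halves of $\int_{-1}^1|h'|^2/\varphi'$ carry the \emph{different} weights $\varphi'(x)$ and $\varphi'(-x)$, so they cannot be folded into a single weighted norm of $(g,f)$ and must each be estimated separately against the global extrema $m(0),M(0)$ of $\varphi'$ over $[-1,1]$ — which is precisely why the sharp constants involve $m(0)$ and $M(0)$ rather than one symmetric weight. The analytic justifications needed along the way, namely that $\tilde h\in H^1$ with the differentiated series valid term by term, are exactly the square-summability of $(nA_n)$ recorded just before the proof of Proposition~\ref{prop:proof-of-series-formula}, and the identity then extends from $\calD((0,1))$ to all $(g,f)\in H_0^1\times L_2$ by density.
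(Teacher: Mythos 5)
Your argument is correct and follows essentially the same route as the paper's proof: both rest on the Parseval identity $4\pi^2\sum_{n}n^2|A_n|^2=\int_{-1}^{1}|h'(x)|^2/\varphi'(x)\dx$ (you obtain it by an explicit change of variables to the standard trigonometric basis, the paper by invoking Lemma~\ref{lem:ONS} directly), combined with the parallelogram identity relating $\int_{-1}^1|h'|^2$ to $\tfrac12\norm{(g,f)}_{H_0^1\times L_2}^2$ and the trapping $m(0)\le\varphi'\le M(0)$ on $[-1,1]$. The only cosmetic difference is that you bound the weight on each half of $[-1,1]$ before folding them together, whereas the paper folds first and weights afterwards; the constants come out identically.
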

\begin{proof}
Recall that $g(x) = h(x) - h(-x)$ and $F(x) = h(x)+h(-x)$ on $[0,1]$. Therefore
\begin{align*}
   \; \bignorm{ (g,f)}_{ H_0^1 \times L_2}^2 
= & \; \bignorm{ g' }_{L_2([0,1])}^2 + \bignorm{ F' }_{L_2([0,1])}^2\\
= & \; \bignorm{ h'(\cdot) + h'(-(\cdot)) }_{L_2([0,1])}^2 +  \bignorm{ h'(\cdot) - h'(-(\cdot)) }_{L_2([0,1])}^2\\
= & \; 2 \bignorm{  h' }^2_{L_2([0,1])} + 2 \bignorm{  h'(-\cdot) }^2_{L_2([0,1])} = 2 \bignorm{ h' }_{L_2([-1,1])}^2
\end{align*}
by parallelogram identity. Estimating the maximum of $\varphi'$ and $\frac{1}{\varphi'}$ on $[-1,1]$ 
allows to relate $\bignorm{ h' }_{L_2([-1,1], \varphi'(x)\dx)}^2$ and $\bignorm{ h' }_{L_2([-1,1])}^2$, and the result follows
by Parseval's identity.
\end{proof}

Observe that for the concrete examples we discuss later, the minimum
respectively maximum is easy to calculate; we obtain therefore
explicit constants in Proposition~\ref{prop:norm-equivalence-f-g}.

\subsection{Energy estimates}\label{subsec:energy}
Define the energy of the problem (\ref{eq:1D-wave-eq-linear-bdry}) as
\[
   E_u(t) = \onehalf\int_{0}^{s(t)} |u_x(x, t)|^2+|u_t(x, t)|^2 \dx.
\]
for all $t \geq 0$. When $t = 0$, we see that
$E_u(0) = \onehalf\norm{(g,f)}_{H_0^1 \times L_2 (0,1)}^2$. In the
case of a 1D-wave equation with time-invariant boundary
(i.e. $s \equiv 1$) the energy is constant. In time-dependent
domains it decays when $s'(t) > 0$ and increases when $s'(t) < 0$.

\begin{lemma}\label{lem:energy--derivative}
The function $t \mapsto E_u(t)$ is decreasing for $t \geq 0$ if $s'(t) > 0$ and increasing when $s'(t) < 0$. More precisely,
\begin{equation} \label{eq:energy-derivative} 
\tfrac{d}{dt}E_u(t) =   \, \tfrac{s'(t)}{2}(s'(t)^2 - 1)\; |u_x(s(t), t)|^2.
\end{equation}
\end{lemma}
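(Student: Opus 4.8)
The plan is to differentiate $E_u(t)$ directly, using the Leibniz rule for an integral with a moving upper limit, and then to exploit the wave equation together with the Dirichlet boundary conditions to collapse the resulting bulk integral into boundary contributions. Since (\ref{eq:1D-wave-eq-linear-bdry}) is linear with real coefficients and the energy is quadratic, I may treat the real and imaginary parts of $u$ separately; hence I assume $u$ is real-valued without loss of generality, so that $|u_x|^2 = u_x^2$ and products such as $u_t u_x$ are unambiguous. I also recall from the proof of Proposition~\ref{prop:proof-of-series-formula} that $u$ is $\Ce^2$ up to the boundary, which justifies every differentiation below.

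First I would apply the Leibniz rule to obtain
\[
\tfrac{d}{dt}E_u(t) = \tfrac{s'(t)}{2}\bigl(|u_x(s(t),t)|^2 + |u_t(s(t),t)|^2\bigr) + \int_0^{s(t)} \bigl(u_x u_{xt} + u_t u_{tt}\bigr)\dx .
\]
Next, using $u_{tt} = u_{xx}$ and integrating the term $\int u_t u_{xx}\dx$ by parts, the bulk integral telescopes, since $\int_0^{s(t)} u_x u_{xt}\dx$ cancels against $-\int_0^{s(t)} u_{xt} u_x \dx$, leaving only the boundary term $\bigl[u_t u_x\bigr]_0^{s(t)}$. Because $u(0,t) = 0$ for all $t$ forces $u_t(0,t) = 0$, the contribution at $x=0$ vanishes, and we are left with
\[
\tfrac{d}{dt}E_u(t) = \tfrac{s'(t)}{2}\bigl(|u_x(s(t),t)|^2 + |u_t(s(t),t)|^2\bigr) + u_t(s(t),t)\,u_x(s(t),t).
\]

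The key step — and the only genuinely non-routine one — is to eliminate $u_t$ at the moving boundary. Differentiating the boundary identity $u(s(t),t) = 0$ in $t$ gives $s'(t)\,u_x(s(t),t) + u_t(s(t),t) = 0$, that is, $u_t(s(t),t) = -s'(t)\,u_x(s(t),t)$. Substituting this into the last display, one has $|u_t(s(t),t)|^2 = s'(t)^2 |u_x(s(t),t)|^2$ and $u_t(s(t),t) u_x(s(t),t) = -s'(t)|u_x(s(t),t)|^2$, so that the prefactor of $|u_x(s(t),t)|^2$ becomes $\tfrac{s'(t)}{2}\bigl((1 + s'(t)^2) - 2\bigr) = \tfrac{s'(t)}{2}(s'(t)^2 - 1)$, which is exactly (\ref{eq:energy-derivative}). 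Finally, since $\norm{s'}_{L_\infty} < 1$ forces $s'(t)^2 - 1 < 0$, the sign of the derivative is opposite to that of $s'(t)$, yielding the claimed monotonicity. The main point to be careful about is the regularity at the moving boundary needed to justify both the Leibniz rule and the differentiation of $u(s(t),t)=0$; both are supplied by the $\Ce^2$ regularity of $u$ up to $x=s(t)$.
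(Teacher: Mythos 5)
Your proof is correct and follows essentially the same route as the paper's: Leibniz rule for the moving upper limit, the wave equation plus integration by parts to reduce the bulk integral to the boundary term $\bigl[u_tu_x\bigr]_0^{s(t)}$, and the relation $u_t(s(t),t)=-s'(t)\,u_x(s(t),t)$ obtained by differentiating the boundary condition. The only difference is cosmetic (you substitute the boundary relation at the end rather than immediately in the Leibniz boundary term, and you add a remark on treating real and imaginary parts separately), so nothing further is needed.
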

\begin{proof}
  Differentiating the constant zero function $u(s(t),t)$ with respect
  to $t$ yields
  $u_t(s(t),t) = - s'(t)\; u_x(s(t),t)$.   We use this twice  in the following calculation.
\begin{align*}
\tfrac{d}{dt}E_u(t) 
=  & \; \tfrac{1}{2}s'(t)(u_t^2+u_x^2)\bigl|_{x= s(t)} \;+\; \tfrac{1}{2} \int_{0}^{s(t)}\tfrac{\partial}{\partial t}(u_t^2+u_x^2) \dx   \\
=  & \; \tfrac{s'(t)}{2} (1{+}s'(t)^2) \; (u_x^2)\bigl|_{x= s(t)} \;+\; \int_{0}^{s(t)}(u_tu_{tt}+u_xu_{tx}) \dx \\
=  & \; \tfrac{s'(t)}{2} (1{+}s'(t)^2) \; (u_x^2)\bigl|_{x= s(t)} \;+\; \int_{0}^{s(t)}(u_tu_{xx}+u_xu_{tx}) \dx\\{}
\text{(integration by parts)}\quad
=  & \; \tfrac{s'(t)}{2} (1{+}s'(t)^2) \; (u_x^2)\bigl|_{x= s(t)} \;+\; \Bigl[ u_tu_x \Bigr]_{x= 0}^{x= s(t)}  \\
=  & \; \tfrac{s'(t)}{2} (1{+}s'(t)^2) \; (u_x^2)\bigl|_{x= s(t)} + u_tu_x\bigl|_{x= s(t)} \\
= & \;  \tfrac{s'(t)}{2} (s'(t)^2-1) \;  |u_x(s(t), t)|^2.
\end{align*}
Recall that $\norm{ s' }_\infty < 1$ to conclude that $\text{sign}(\tfrac{d}{dt}E_u(t)) = - \text{sign}(s'(t))$.
\end{proof}

\begin{proposition}\label{prop:enery-estimate}
 For (\ref{eq:1D-wave-eq-linear-bdry}) the following energy estimate holds
\begin{equation}
\tfrac{ m(t)}{2M(0)}
\; \bignorm{ (g, f) }_{H_0^1 \times L_2}^2
\quad \leq \quad E_u(t)  \quad \leq \quad 
\tfrac{M(t)}{2m(0)}
\; \bignorm{ (g, f) }_{H_0^1 \times L_2}^2
\end{equation}
where the constants are given by (\ref{eq:def-of-constants-c-and-C}).
\end{proposition}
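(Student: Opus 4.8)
The plan is to compute $E_u(t)$ directly from the series representation of $u$ and to reduce it, via Parseval's identity, to the quantity $\sum_n n^2|A_n|^2$ that already governs Proposition~\ref{prop:norm-equivalence-f-g}. I would work first with data $f,g\in\calD((0,1))$, so that the series for $u$ and all its term-by-term derivatives converge as established in the proof of Proposition~\ref{prop:proof-of-series-formula}, and extend the resulting two-sided estimate to general data by density.

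The first step is to split $u$ into its right- and left-travelling parts. Writing $v(x,t)=\sum_n A_n e^{2\pi i n\varphi(t+x)}$ and $w(x,t)=\sum_n A_n e^{2\pi i n\varphi(t-x)}$, we have $u=v-w$; since $v$ depends only on $t+x$ and $w$ only on $t-x$, one gets $v_x=v_t$ and $w_x=-w_t$. Hence $u_x=v_t+w_t$ and $u_t=v_t-w_t$, so the parallelogram identity gives the pointwise equality $|u_x|^2+|u_t|^2=2|v_t|^2+2|w_t|^2$, and therefore
\[
E_u(t)=\int_0^{s(t)}\bigl(|v_t|^2+|w_t|^2\bigr)\dx .
\]

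The crux is a change of variables merging the two integrals into a single one over the interval of Lemma~\ref{lem:ONS}. Setting $\Phi(y):=2\pi i\sum_n nA_n\,e^{2\pi i n\varphi(y)}$, we have $v_t=\varphi'(t{+}x)\,\Phi(t{+}x)$ and $w_t=\varphi'(t{-}x)\,\Phi(t{-}x)$. Substituting $y=t+x$ in the first integral and $y=t-x$ in the second, the two pieces join to give
\[
E_u(t)=\int_{t-s(t)}^{t+s(t)}\varphi'(y)\,|\Phi(y)|^2\,\varphi'(y)\dy .
\]
Now $[t-s(t),t+s(t)]$ is precisely the interval on which, by Lemma~\ref{lem:ONS}, $(e^{2\pi i n\varphi})_n$ is a complete orthonormal system in $H=L_2\bigl([t{-}s(t),t{+}s(t)],\varphi'(x)\dx\bigr)$. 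Since $\Phi=\sum_n(2\pi i nA_n)e^{2\pi i n\varphi}$, Parseval's identity yields $\int_{t-s(t)}^{t+s(t)}|\Phi|^2\,\varphi'\dy=4\pi^2\sum_n n^2|A_n|^2$.

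To finish, I would bound the remaining factor $\varphi'(y)$ by its extrema $m(t)\le\varphi'(y)\le M(t)$ on $[t{-}s(t),t{+}s(t)]$ from (\ref{eq:def-of-constants-c-and-C}), obtaining
\[
4\pi^2 m(t)\,\textstyle\sum_n n^2|A_n|^2\ \le\ E_u(t)\ \le\ 4\pi^2 M(t)\,\textstyle\sum_n n^2|A_n|^2,
\]
and then insert the two-sided bound for $\sum_n n^2|A_n|^2$ supplied by Proposition~\ref{prop:norm-equivalence-f-g}; the ratio $4\pi^2/8\pi^2=\tfrac12$ produces exactly the constants $\tfrac{m(t)}{2M(0)}$ and $\tfrac{M(t)}{2m(0)}$. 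I do not expect a genuine obstacle: the one point demanding care is the change of variables, together with the observation that it reproduces precisely the \emph{symmetric} interval $[t{-}s(t),t{+}s(t)]$ for which Lemma~\ref{lem:ONS} holds — this is what makes Parseval applicable at an arbitrary time $t$ rather than only at $t=0$.
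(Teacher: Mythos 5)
Your proposal is correct and follows essentially the same route as the paper's own proof: split the solution into its two travelling-wave components, use the parallelogram identity to write $|u_x|^2+|u_t|^2$ as a sum of two squares, change variables to merge the integrals onto the symmetric interval $[t{-}s(t),t{+}s(t)]$, apply Lemma~\ref{lem:ONS} (Parseval) after peeling off one factor of $\varphi'$ bounded by $m(t)$ and $M(t)$, and conclude with Proposition~\ref{prop:norm-equivalence-f-g}. The constants come out exactly as in the statement, so there is nothing to add.
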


\begin{proof}
Taking term by term derivatives in (\ref{eq:solution-as-series}) gives
\begin{align*}
u_x(x,t)  = & \; 2\pi i \sum_{n \in \ZZ} n A_n  \bigl(\varphi'(t{+}x)e^{2\pi i n \, \varphi(t+x)} + \varphi'(t{-}x)e^{2\pi i n \, \varphi(t-x)}\bigr)\\
u_t(x,t)  = & \; 2\pi i \sum_{n \in \ZZ} n A_n  \bigl(\varphi'(t{+}x)e^{2\pi i n \, \varphi(t+x)} - \varphi'(t{-}x)e^{2\pi i n \, \varphi(t-x)}\bigr)
\end{align*}
Therefore, using parallelogram identity as in the proof of
Proposition~\ref{prop:norm-equivalence-f-g},
\begin{align*}
2 E_u(t) =  & \; \int_{0}^{s(t)}\bigl|u_x(x,t)\bigr|^2 + \bigl|u_t(x,t)\bigr|^2 \dx\\
= & \; 8\pi^2  \Bigl(\int_{0}^{s(t)}\Bigl| \sum_{n\in \ZZ} n A_n  \varphi'(t{+}x) e^{2\pi i n \, \varphi(t+x)} \Bigr|^2 \dx 
   \; +  \;  \int_{0}^{s(t)} \Bigl| \sum_{n\in \ZZ} n A_n \varphi'(t{-}x) e^{2\pi i n \, \varphi(t-x)} \Bigr|^2 \dx \Bigr)\\ 
= & \; 8 \pi^2  \int_{t-s(t)}^{t+s(t)}\Bigl| \sum_{n\in \ZZ} n A_n \bigl(\varphi'(y) e^{2\pi i n \, \varphi(y)}\bigr)\Bigr|^2 \dy.
\end{align*}
This yields the double inequality
\[
 4\pi^2 m(t) \;  a(t) 
\leq   \; E_u(t) \;  
\leq   \;  4\pi^2 M(t) \; a(t)
\]
where
\[ 
    a(t) = \int_{t-s(t)}^{t+s(t)}\Bigl| \sum_{n\in \ZZ}n A_n e^{2\pi i n \, \varphi(y)}\Bigr|^2\varphi'(y) \dy.
\]
By Lemma~\ref{lem:ONS} and Proposition~\ref{prop:norm-equivalence-f-g} we conclude.
\end{proof}

\section{Point Observations}

\subsection{Boundary Observation}
Recall the notation $\alpha(t)=t+s(t)$, $\beta(t) = t -s(t)$ and
$\gamma = \alpha \circ \beta^{-1}$.

\begin{theorem}\label{thm:obs-on-left-bdry}
  For any admissible boundary curve $s(t)$ and solution $u$ to the
  moving boundary wave equation {\rm     (\ref{eq:1D-wave-eq-linear-bdry})} given by
  (\ref{eq:solution-as-series}) the following double inequality holds:
\begin{equation}\label{eq:thm-obs-on-left-bdry}
2 \tfrac{m(\beta^{-1}(0))}{M(0)}  \, \bignorm{(g,f)}_{H_0^1 \times L_2}^2
\quad \leq \quad
   \int_0^{\gamma(0)} \bigl|u_x(0,t) \bigr|^2 \, \dt
\quad \leq \quad
2 \tfrac{M(\beta^{-1}(0))}{m(0)} \, \bignorm{(g,f)}_{H_0^1 \times L_2}^2
\end{equation}
In particular, with the observations $C \psi = \psi_x(0)$ the problem {\rm
  (\ref{eq:1D-wave-eq-linear-bdry})} is exactly observable in time
$\tau$ if and only if $\tau \geq \gamma(0)$.
\end{theorem}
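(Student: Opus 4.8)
The plan is to reduce the boundary observation at $x=0$ to a Parseval identity on an interval of length exactly one in the $\varphi$-variable. Differentiating the series (\ref{eq:solution-as-series}) term by term (licit since $\varphi \in \Ce^2$ and the $(nA_n)$ are square-summable) and setting $x=0$ collapses the two exponentials, giving $u_x(0,t) = 4\pi i\,\varphi'(t) \sum_{n\in\ZZ} n A_n e^{2\pi i n\,\varphi(t)}$, hence
\[
\int_0^{\gamma(0)} \bigl|u_x(0,t)\bigr|^2\dt = 16\pi^2 \int_0^{\gamma(0)} \varphi'(t)^2\, \Bigl|\sum_{n\in\ZZ} n A_n e^{2\pi i n\,\varphi(t)}\Bigr|^2\dt.
\]
The substitution $y=\varphi(t)$ absorbs one factor $\varphi'$ into $\dt$, turning this into $16\pi^2\int_{\varphi(0)}^{\varphi(\gamma(0))} \varphi'(\varphi^{-1}(y))\,\bigl|\sum_n nA_n e^{2\pi i n y}\bigr|^2\dy$, where the surviving $\varphi'(\varphi^{-1}(y))$ plays the role of a weight.

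Next I would pin down the geometry. Writing $t^\ast=\beta^{-1}(0)$, the relation $\beta(t^\ast)=0$ reads $t^\ast=s(t^\ast)$, so $\gamma(0)=\alpha(t^\ast)=2t^\ast$ and the functional equation (\ref{eq:functional-equation}) at $t^\ast$ gives $\varphi(\gamma(0))-\varphi(0)=1$: the $y$-interval $[\varphi(0),\varphi(\gamma(0))]$ has length exactly one. Moreover $[0,\gamma(0)]=[\beta(t^\ast),\alpha(t^\ast)]$, so by (\ref{eq:def-of-constants-c-and-C}) the extremes of $\varphi'(\varphi^{-1}(y))$ over this interval are precisely $m(\beta^{-1}(0))$ and $M(\beta^{-1}(0))$. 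Bounding the weight by these two constants and invoking Parseval for $\{e^{2\pi i n y}\}$ on a period (any interval of length one) yields
\[
16\pi^2 m(\beta^{-1}(0)) \sum_{n\in\ZZ} n^2|A_n|^2 \;\le\; \int_0^{\gamma(0)} \bigl|u_x(0,t)\bigr|^2 \dt \;\le\; 16\pi^2 M(\beta^{-1}(0)) \sum_{n\in\ZZ} n^2|A_n|^2,
\]
and Proposition~\ref{prop:norm-equivalence-f-g} converts $\sum_n n^2|A_n|^2$ into $\bignorm{(g,f)}_{H_0^1\times L_2}^2$ at the cost of the factors $\tfrac{1}{8\pi^2 M(0)}$ and $\tfrac{1}{8\pi^2 m(0)}$, so that $16\pi^2/8\pi^2=2$ produces exactly (\ref{eq:thm-obs-on-left-bdry}).

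For the final equivalence, sufficiency is immediate: if $\tau\ge\gamma(0)$ then $\int_0^\tau \ge \int_0^{\gamma(0)}$ and the lower bound above already gives exact observability. Necessity is the delicate point. For $\tau<\gamma(0)$ the same substitution maps $[0,\tau]$ onto a $y$-interval of length $\varphi(\tau)-\varphi(0)<1$, and on such a short interval the exponentials $\{e^{2\pi i n y}\}$ are overcomplete, so the synthesis map $(c_n)\mapsto\sum_n c_n e^{2\pi i n y}$ is no longer bounded below. Concretely I would pick a nonzero $L_2$ function supported on the complementary subinterval of one period and having mean zero; its Fourier coefficients $(c_n)$ satisfy $c_0=0$, lie in $\ell_2$, and annihilate the sum on $[0,\tau]$. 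Setting $A_n=c_n/n$ (automatically square-summable, with $\sum_n n^2|A_n|^2>0$) produces, through the isomorphism underlying Proposition~\ref{prop:norm-equivalence-f-g}, nonzero initial data whose observation $\int_0^\tau|u_x(0,t)|^2\dt$ vanishes; hence no positive lower bound can hold and exact observability fails.

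I expect the main obstacle to be precisely this necessity argument: one must argue rigorously that shortening the $y$-interval below length one destroys the lower Riesz bound, and verify that the resulting kernel element corresponds to admissible, genuinely nonzero data. This is why arranging $c_0=0$ — so that $A_n=c_n/n$ remains in $\ell_2$ and the data norm stays strictly positive — is the crucial bookkeeping step of the whole equivalence.
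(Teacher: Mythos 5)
Your argument is correct and follows essentially the same route as the paper's proof: term-by-term differentiation at $x=0$, identification of the optimal time via $t^\ast=\beta^{-1}(0)$ and $\gamma(0)=\alpha(t^\ast)$, reduction to Parseval on a unit-length interval in the $\varphi$-variable (which is exactly the content of Lemma~\ref{lem:ONS}), removal of the weight via $m(\beta^{-1}(0))$ and $M(\beta^{-1}(0))$, and Proposition~\ref{prop:norm-equivalence-f-g} to reach the stated constants. Your necessity argument (a mean-zero function supported on the complementary subinterval of the period, with $A_n=c_n/n$) is in fact more explicit than the paper's one-line appeal to the change of variables back to the standard trigonometric basis, and correctly handles the $c_0=0$ bookkeeping.
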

\begin{proof}
 Differentiating   $u$ term by term, and evaluating at $x=0$ we have for all $\tau >0$
\[
\norm{u_x(0,t)}_{L_2(0,\tau,\frac{1}{\varphi'(t)})}  
= \; \int_{0}^{\tau} \Bigl| 4 \pi i
     \sum_{n\in \ZZ} n\, A_n  \varphi'(t) e^{2\pi i n \,\varphi(t)}\Bigr|^2 \, \tfrac{\dt}{\varphi'(t)}.
\]
Consider $\beta(t) = t{-}s(t)$ with domain $t \in
[0,+\infty)$.
Clearly, $\beta(t)$ is strictly increasing and since
$\beta(0) = -1 < 0$, there exist a unique $t_0$ such that
$\beta(t_0) =0$.  Let $\tau_0 := t_0{+}s(t_0) = \gamma(0)$. Then, by
Lemma~\ref{lem:ONS},
\[
   \norm{u_x(0,t)}_{L_2(0,\tau_0,\frac{1}{\varphi'(t)})}^2  = 16 \pi^2 \sum_{n\in \ZZ} n^2  |A_n|^2 
\]
Clearly,
\[
    \tfrac{1}{M(t_0)} \norm{u_x(0,t)}_{L_2(0,\tau_0)}^2
\le \norm{u_x(0,t)}_{L_2(0,\tau_0,\frac{1}{\varphi'(t)})}^2 
\le \tfrac{1}{m(t_0)} \norm{u_x(0,t)}_{L_2(0,\tau_0)}^2.
\]
Combining this with Proposition~\ref{prop:norm-equivalence-f-g}, we
find our double inequality. From this is obvious that observation
times $\tau \ge \tau_0$ suffice.  On the other hand, if
$\tau< \tau_0$, $\norm{u_x(0,t)}_{L_2(0,\tau,\frac{1}{\varphi'(t)})}^2$ and
$\sum n^2 |A_n|^2$ cannot be comparable, which is easy to see by a
change of variables bringing it back the the standard trigonometric
orthonormal basis of $L_2(0, 1)$. This shows, again by
Proposition~\ref{prop:norm-equivalence-f-g}, that exact observation is
impossible.
\end{proof}

\begin{theorem}\label{thm:obs-on-right-bdry}
  For the solution $u$ given by (\ref{eq:solution-as-series}) to the
  moving boundary wave equation {\rm
    (\ref{eq:1D-wave-eq-linear-bdry})} the following double
  inequality holds:
\begin{equation}\label{eq:thm-obs-on-right-bdry}
                     C_1 \, \bignorm{(g,f)}_{H_0^1 \times L_2}^2
\quad \leq \quad    \int_0^{\gamma^{-1}(0)}   \bigl| u_x(s(t), t) \bigr|^2\dt     
\quad \leq \quad    C_2\,\bignorm{(g,f)}_{H_0^1 \times L_2}^2 
\end{equation}
where $C_1 =  \tfrac{m(0)}{2 M(0)(1+\norm{s'}_\infty)} (1{+}\tfrac{m(t_0)}{M(t_0)})^2$ and
$C_2 =  \tfrac{M(0)}{2m(0) (1-\norm{s'}_\infty)} (1{+}\tfrac{M(t_0)}{m(t_0)})^2 $.

In particular, with the observations $ M(t)\psi = \psi_x(s(t))$ the
problem {\rm (\ref{eq:1D-wave-eq-linear-bdry})} is exactly observable
in time $\tau$ if and only if $\tau \geq \gamma^{-1}(0)$.
\end{theorem}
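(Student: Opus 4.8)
The plan is to mirror the proof of Theorem~\ref{thm:obs-on-left-bdry}, the essential new feature being that on the moving boundary \emph{both} characteristic arguments $t+x$ and $t-x$ hit the curve, so the two exponential families collapse into one. First I would differentiate the series (\ref{eq:solution-as-series}) term by term and evaluate at $x=s(t)$; using $t+s(t)=\alpha(t)$, $t-s(t)=\beta(t)$ together with the functional equation (\ref{eq:functional-equation}) in the form $e^{2\pi i n\,\varphi(\alpha(t))}=e^{2\pi i n\,\varphi(\beta(t))}$, the identity
\[
u_x(s(t),t)=2\pi i\,\bigl(\varphi'(\alpha(t))+\varphi'(\beta(t))\bigr)\sum_{n\in\ZZ}n\,A_n\,e^{2\pi i n\,\varphi(\beta(t))}
\]
should follow. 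Thus, in contrast to the left boundary where a single factor $\varphi'(t)$ appeared, here the Fourier sum is multiplied by the sum of the two boundary values $\varphi'(\alpha(t))+\varphi'(\beta(t))$.

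Next I would take the modulus squared, integrate over the observation window $[0,\gamma^{-1}(0)]$, and carry out the change of variables $y=\beta(t)$, whose Jacobian is $\beta'(t)=1-s'(t)$. The point of this end time is that the substitution carries the window bijectively onto one full period $[-1,1]$ of the orthonormal system of Lemma~\ref{lem:ONS} (taken with $t_0=0$), matching $\beta(0)=-1$ at the left end. Writing the integrand as
\[
\frac{\bigl(\varphi'(\alpha(\beta^{-1}(y)))+\varphi'(y)\bigr)^2}{\bigl(1-s'(\beta^{-1}(y))\bigr)\,\varphi'(y)}\;\Bigl|\sum_{n\in\ZZ}n\,A_n\, e^{2\pi i n\,\varphi(y)}\Bigr|^2\,\varphi'(y)
\]
isolates the orthonormal weight $\varphi'(y)\dy$, to which Parseval (Lemma~\ref{lem:ONS}) applies and returns $\sum_{n}n^2|A_n|^2$.

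The estimate then reduces to bounding the prefactor $K(y):=\bigl(\varphi'(\alpha)+\varphi'(\beta)\bigr)^2/\bigl((1-s')\,\varphi'(\beta)\bigr)$ from above and below by constants. I would factor $\bigl(\varphi'(\alpha)+\varphi'(\beta)\bigr)^2=\varphi'(\beta)^2\bigl(1+\varphi'(\alpha)/\varphi'(\beta)\bigr)^2$ and estimate the three ingredients separately: the Jacobian $1-s'$ by $1\mp\norm{s'}_\infty$, the factor $\varphi'(\beta)=\varphi'(y)$ over the period $[-1,1]$ by $m(0)$ and $M(0)$, and the ratio $\varphi'(\alpha(t))/\varphi'(\beta(t))$ of the two boundary values by the oscillation quotients $m(t_0)/M(t_0)$ and $M(t_0)/m(t_0)$. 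These three contributions reproduce the factors $1\pm\norm{s'}_\infty$, the quotient $m(0)/M(0)$ and the squared brackets $(1+m(t_0)/M(t_0))^2$, $(1+M(t_0)/m(t_0))^2$ of $C_1,C_2$; converting $\sum_{n}n^2|A_n|^2$ back to $\norm{(g,f)}_{H_0^1\times L_2}^2$ through Proposition~\ref{prop:norm-equivalence-f-g} then yields the double inequality. I expect this weight estimate to be the main obstacle: unlike the left boundary the factor does not cancel exactly, so controlling the sum of two derivative values at the two ends of a characteristic together with the non-constant Jacobian $1-s'$ is exactly what makes $C_1,C_2$ lossy rather than sharp.

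Finally, for the equivalence ``exactly observable in time $\tau$ iff $\tau\ge\gamma^{-1}(0)$'', I would argue as in Theorem~\ref{thm:obs-on-left-bdry}: times beyond $\gamma^{-1}(0)$ only enlarge the integral, so they suffice; whereas for $\tau<\gamma^{-1}(0)$ the substitution $y=\beta(t)$ maps the window onto a strict subinterval of a period, on which $\{e^{2\pi i n\,\varphi}\}$ is no longer a Riesz basis, so after reduction to the standard trigonometric system on $L_2(0,1)$ the observation term and $\sum_{n}n^2|A_n|^2$ cannot be comparable, and exact observability fails.
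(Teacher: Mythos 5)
Your proposal is correct and follows essentially the same route as the paper: the same collapsed representation $u_x(s(t),t)=2\pi i(\varphi'(\alpha(t))+\varphi'(\beta(t)))\sum_n nA_n e^{2\pi i n\varphi(\beta(t))}$ via the functional equation, the same substitution along $\beta$ onto one period of the orthonormal system of Lemma~\ref{lem:ONS}, the same three-way bounding of the Jacobian $1-s'$, the weight $\varphi'(\beta)$ and the ratio $\varphi'(\alpha)/\varphi'(\beta)$, and the same conclusion via Proposition~\ref{prop:norm-equivalence-f-g}; your reorganisation (one prefactor $K(y)$ instead of the paper's intermediate weighted norm $\omega(t)\dt$ and change of variables $\xi=\varphi(\beta(t))$) is cosmetic and yields the same constants $C_1,C_2$. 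The optimality argument you sketch for $\tau<\gamma^{-1}(0)$ likewise mirrors the one the paper gives for the left boundary.
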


\begin{proof} 
  Next we consider observation on the right boundary $x=s(t)$.  As in
  the proof of Theorem~\ref{thm:obs-on-left-bdry}, let $t_0$ be such
  that $\beta(t_0) = t_0{-}s(t_0) = 0$ and define $\tau_0 := \gamma^{-1}(0)$.
  Taking the derivative of $u(x,t)$ with respect to $t$ term by term,
  substituting $x = s(t)$ and exploiting
  (\ref{eq:functional-equation}) yields
\begin{equation}  \label{eq:representation-of-u_x}
  \begin{split}
 u_x(s(t),t) 
= & \; 2\pi i \sum_{n\in \ZZ} n\, A_n \Bigl(e^{2\pi i n \,\varphi(t+s(t))} \varphi'(t+s(t))) + e^{2\pi i n \,\varphi(t-s(t))}\varphi'(t-s(t))\Bigr)\\
= & \; 2\pi i \sum_{n\in \ZZ} \varphi'(t-s(t)) e^{2\pi i n \,\varphi(t-s(t))}    n\, A_n 
    \Bigl( 1 + \frac{ \varphi'(t+s(t))}{  \varphi'(t-s(t))} \Bigr)
  \end{split}
\end{equation}
Then
\begin{equation}\label{eq:bdry-obs-equivalence}
   (1+\tfrac{m(t_0)}{M(t_0)}) \le  \Bigl(1 + \frac{ \varphi'(t+s(t))}{\varphi'(t-s(t))}  \Bigr) \le (1+\tfrac{M(t_0)}{m(t_0)})
\end{equation}
Let $\omega(t) = \frac{1-s'(t) }{ \varphi'(t-s(t))}$. Then
\[
\bignorm{u_x(s(t), t)}_{L_2(0,\tau_0, \omega(t)\dt)}^2 \sim 4\pi^2
\int_{0}^{\tau_0} \Bigl| \sum_{n\in \ZZ} e^{2\pi i n
  \,\varphi(t-s(t))} n\, A_n \Bigr|^2 \varphi'(t{-}s(t)) (1{-}s'(t)) \dt
\]
where the equivalence comes from (\ref{eq:bdry-obs-equivalence}). We
make the change of variables $\xi = \varphi(t{-}s(t))$ and observe that
(\ref{eq:functional-equation}) gives an upper bound of the integral
to be $\varphi(\beta(\tau_0))) = 1 + \varphi(\beta(0))$. So
\[
  \bignorm{u_x(s(t), t)}_{L_2(0,\tau_0,\omega(t)\dt)}^2 
\sim  4\pi^2  \int_{\varphi(\beta(0))}^{\varphi(\beta(0)) + 1}  \Bigl|  \sum_{n\in \ZZ}  e^{2\pi i n \xi} n A_n\,  \Bigr|^2 \dxi
 = 4\pi^2 \sum_{n\in \ZZ} n^2 |A_n|^2
\]
We summarise:
\[
 4\pi^2 (1+\tfrac{m(t_0)}{M(t_0)})^2   \sum_{n\in \ZZ} n^2 |A_n|^2 
\; \leq \; 
 \bignorm{u_x(s(t), t)}_{L_2(0,\tau_0,\omega(t)\dt)}^2 
\; \leq \; 
 4\pi^2 (1+\tfrac{M(t_0)}{m(t_0)})^2  \sum_{n\in \ZZ} n^2 |A_n|^2 
\]
We conclude the proof observing that 
$\frac{1-\norm{s'}_\infty }{M(0)} \le  \omega(t) \le \frac{1+\norm{s'}_\infty }{m(0)}$
which allows to remove the weight function:
\[
 \tfrac{4\pi^2 m(0)}{1+\norm{s'}_\infty} (1+\tfrac{m(t_0)}{M(t_0)})^2   \sum_{n\in \ZZ} n^2 |A_n|^2 
\; \leq \; 
 \bignorm{u_x(s(t), t)}_{L_2(0,\tau_0)}^2 
\; \leq \; 
 \tfrac{4\pi^2 M(0)}{1-\norm{s'}_\infty} (1+\tfrac{M(t_0)}{m(t_0)})^2  \sum_{n\in \ZZ} n^2 |A_n|^2 
\]
We conclude using Proposition~\ref{prop:norm-equivalence-f-g}.
\end{proof}


\begin{minipage}{.6\linewidth}
  Let us finish this paragraph with a little observation. The
  optimal times for boundary observations given in
  Theorems~\ref{thm:obs-on-left-bdry} and \ref{thm:obs-on-right-bdry}
  are precisely the times where a characteristic emerging from the
  left (resp. right) boundary point $x=0$, resp. $x=1$ hit again the
  boundary curve, see the picture on the right.

  A second remark is that since $u(s(t), t) = 0$, taking derivative
  with respect to $t$ gives $s'(t) u_x( s(t), t) = - u_t(s(t), t)$. We
  may hence replace $u_x$ by $u_t$ in the inequality
  (\ref{eq:thm-obs-on-right-bdry}), at the only price to modify the
  constants by a factor $\norm{s'}_\infty$ .
\end{minipage}
\hspace*{.5cm}
\begin{minipage}{.35\linewidth}
  \begin{tikzpicture}
  \draw[->] (-.5,0) -- (3,0) ;    \path (2.5, -.2) node {$x$} ; 
  \draw[->] (0,-.5) -- (0,5) ;    \path (-.2, 4.8) node {$t$} ; 
  \path (1, -.2) node {$1$} ;
  \draw (1, 0) to[out=70,in=270] (2,2)  to[out=90,in=290] (1.8, 3) to[out=110,in=270] (2.5,5) ; 
  \path (2.8, 4) node {$x=s(t)$} ;
  \draw[dotted] (0,0) -- (2, 2) -- (0, 4); 
  \draw (-.1,4) -- (.1, 4) ;   \path (-.7, 4) node {$\gamma(0)$} ;

  \draw[dashed] (1,0) -- (0, 1) -- (1.85, 2.85); 
  \draw (-.1, 2.85) -- (.1,2.85) ;   \path (-.7, 2.85) node {$\gamma^{-1}(0)$} ;
\end{tikzpicture}
\end{minipage}

Somehow a similar result to Theorem~\ref{thm:obs-on-right-bdry} in a
dual setting in terms of controllability have been shown in
\cite{Cui-Liu-Gao} for the special case of a linear moving wall
$s(t) = 1+\varepsilon t$ by a transformation to a cylindrical domain
proposed by Miranda \cite{Miranda}. The minimal control time estimate
was however far from optimal. Their result (again only for the linear
moving wall case) was subsequently improved in \cite{SunLiLu} who found
the same minimal control time as ourselves by a different
method\footnote{Caution: when writing out the parametrisation of the
  boundary integral in \cite[formula (2.2)]{SunLiLu}, the authors
  forget a factor $(1{+}\varepsilon)^{\nicefrac{1}{2}}$. This wrong
  factor then appears in many subsequent estimates in their paper.}.

\subsection{Internal Point observation}
Next, we turn our attention to observation on an internal point.  In
the situation where $s(t) = 1$ and hence $\varphi(x) = x$, the
solution $u$ to (\ref{eq:1D-wave-eq-linear-bdry}) is given by a
sine-series (due to Dirichlet boundary conditions),
\[
  u(x,t) = \sum_{n \in \ZZ} a_n e^{i\pi n t} \; \sin\bigl( n\pi x\bigr).
\]
Consequently, internal point observation at $x{=}a$ is not possible when
$a \in \QQ$ since then infinitely many terms in the sum vanish,
independently of the leading coefficient. One way to counter this
problem is to obtain observability results for the average of $|u|^2$
in a small neighbourhood of a fixed internal point $a$, see
\cite{FabrePuel}. It is also well known that another way to counter
this problem is to consider a moving interior point, see for example
\cite{Castro:moving-interior,Khapalov:1995,Khapalov:2001}.  We follow
in this article the idea that fixed domain with moving observers
should somehow behave similar to moving domains with fixed
observers. The following result confirms this intuition: for any fixed
point $a \in (0,1)$, consider a Neumann observer defined by
$Cu = u_t(a,t)$ to the solution $u$ of the moving boundary wave
equation (\ref{eq:1D-wave-eq-linear-bdry}).

\begin{theorem} \label{thm:obs-on-internal-point} Let $s$ be an
  monotonic admissible boundary curve and $\varphi$ be a
  $\Ce^2$-solution to (\ref{eq:functional-equation}). Assume
  additionally that $\varphi'$ is strictly decreasing if $s(\cdot)$ is
  increasing or that $\varphi'$ is strictly increasing if $s(\cdot)$ is
  decreasing, respectively.
 
  Then solution $u$ to the wave equation
  {\rm (\ref{eq:1D-wave-eq-linear-bdry}) } satisfies the following
  double inequality:
\[
C_1(a) \; \bignorm{(g,f)}_{H_0^1 \times L_2}^2
\quad \le \quad  
 \int_0^{a+\gamma(-a)} \bigl| u_x(a,t) \bigr|^2 \dt 
\quad \le \quad  
C_2(a) \; \bignorm{(g,f)}_{H_0^1 \times L_2}^2,
\]
where the constants $C_1$ and $C_2$ depend only on $\varepsilon$ and
$a$. We provide them explicitly in the proof. 
\end{theorem}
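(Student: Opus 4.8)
The plan is to exploit the d'Alembert-type structure of the series solution. Writing $\mathcal H(\tau) := \sum_{n\in\ZZ} A_n\, e^{2\pi i n\,\varphi(\tau)}$, the solution (\ref{eq:solution-as-series}) is $u(x,t) = \mathcal H(t{+}x) - \mathcal H(t{-}x)$, so that differentiating termwise (justified exactly as in the proof of Proposition~\ref{prop:proof-of-series-formula}) gives the clean expression
\[
u_x(a,t) = \mathcal H'(t{+}a) + \mathcal H'(t{-}a),\qquad \mathcal H'(\tau) = 2\pi i\sum_{n\in\ZZ} n A_n\,\varphi'(\tau)\,e^{2\pi i n\varphi(\tau)}.
\]
The functional equation (\ref{eq:functional-equation}) becomes the invariance $\mathcal H\circ\gamma = \mathcal H$ (since $\varphi\circ\alpha = \varphi\circ\beta + 1$ and $e^{2\pi i n}=1$); in particular $\varphi(\gamma(-a)) - \varphi(-a) = 1$, so $[-a,\gamma(-a)]$ is exactly \emph{one} period of $\mathcal H$.

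First I would substitute $\sigma = t{-}a$ to obtain, with $\tau = a+\gamma(-a)$, the exact identity
\[
\int_0^{a+\gamma(-a)} \bigl|u_x(a,t)\bigr|^2\dt = \int_{-a}^{\gamma(-a)} \bigl|\mathcal H'(\sigma{+}2a) + \mathcal H'(\sigma)\bigr|^2 \,\mathrm{d}\sigma ,
\]
the observation time being chosen precisely so that the integration range is one full period. Introducing the phase profile $P(\xi) := \sum_{n\in\ZZ} n A_n e^{2\pi i n\xi}$ (so $\int_0^1|P|^2 = \sum_n n^2|A_n|^2$ and $\mathcal H'(\sigma) = 2\pi i\,\varphi'(\sigma)P(\varphi(\sigma))$), the change of variables $\xi = \varphi(\sigma)$ turns the \emph{anchor} term $\int_{-a}^{\gamma(-a)}|\mathcal H'(\sigma)|^2\,\mathrm{d}\sigma$ into $4\pi^2\int_0^1|P(\xi)|^2\,w(\xi)\dxi$ with weight $w(\xi)=\varphi'(\varphi^{-1}(\xi))$ pinched between $m(\cdot)$ and $M(\cdot)$ on $[-a,\gamma(-a)]$; by Parseval this term alone is comparable to $\sum_n n^2|A_n|^2$, hence to $\bignorm{(g,f)}_{H_0^1\times L_2}^2$ via Proposition~\ref{prop:norm-equivalence-f-g}. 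This is the exact analogue of the anchor computations in Theorems~\ref{thm:obs-on-left-bdry} and~\ref{thm:obs-on-right-bdry}.

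The upper bound is then routine. By the triangle inequality the integrand is dominated by $2|\mathcal H'(\sigma)|^2 + 2|\mathcal H'(\sigma{+}2a)|^2$; the shifted term integrates $|\mathcal H'|^2$ over a range of finite phase-length (a fixed, $a$-dependent number of periods), so periodicity together with the weight bounds closes it with an explicit $C_2(a)$. The genuine difficulty is the \emph{lower} bound, where the shifted copy $\mathcal H'(\cdot{+}2a)$ threatens to cancel the anchor term. In phase coordinates the observed quantity reads $q(\xi)P(\xi) + p(\xi)\,P(\Gamma(\xi))$ with positive weights $q=\sqrt{\varphi'(\sigma)}$, $p=\varphi'(\sigma{+}2a)/\sqrt{\varphi'(\sigma)}$ and phase twist $\Gamma(\xi)=\varphi(\varphi^{-1}(\xi){+}2a)$. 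Were $\varphi$ linear (i.e. $\varphi'\equiv$ const, the cylindrical case), $\Gamma$ would be a rigid rotation $\xi\mapsto\xi{+}c$ with $p\equiv q$, and the symbol $1+e^{2\pi i nc}$ would vanish at the resonant frequencies $nc\in\ZZ+\tfrac12$ — this is exactly the classical failure of interior observability at rational points.

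Here the standing hypothesis enters decisively. Because $\varphi'$ is \emph{strictly} monotone, two things break the resonance simultaneously: the two weights satisfy $\varphi'(\sigma)\neq\varphi'(\sigma{+}2a)$ pointwise (so $p\not\equiv q$), and $\delta(\xi):=\Gamma(\xi)-\xi$ has derivative $\tfrac{\varphi'(\sigma+2a)}{\varphi'(\sigma)}-1$ of constant nonzero sign, making $\Gamma$ a genuine twist of the circle rather than a rotation. I would make this quantitative by estimating the Cauchy–Schwarz defect in $\|qP + p\,P\circ\Gamma\|_{L_2}^2 = \|qP\|^2 + \|p\,P\circ\Gamma\|^2 + 2\,\mathrm{Re}\langle qP, p\,P\circ\Gamma\rangle$, showing that equality in the cross term (which alone would permit total cancellation) is excluded with a gap governed by the oscillation of $\varphi'$ across $[-a,\gamma(-a)]$; this oscillation is precisely where the explicit dependence of $C_1(a)$ on $\varepsilon$ and $a$ will come from. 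I expect this de-resonation estimate — converting the qualitative non-degeneracy of the twist into a numerical lower bound uniform over all frequencies — to be the main obstacle, the remaining steps merely transporting it back through Proposition~\ref{prop:norm-equivalence-f-g}.
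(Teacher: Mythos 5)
Your setup --- the d'Alembert form $u_x(a,t)=\mathcal H'(t{+}a)+\mathcal H'(t{-}a)$, the identification of $[{-}a,\gamma(-a)]$ as exactly one period of the anchor term, and the triangle-inequality upper bound --- coincides with the paper's. The genuine gap is the lower bound, which you correctly single out as the crux but then only outline: the ``de-resonation'' of the cross term $\langle qP,\,pP\circ\Gamma\rangle$ that you propose is a uniform-in-$P$ Cauchy--Schwarz defect for a weighted composition operator, and nothing in your text establishes it. Such a gap would indeed suffice (it gives $\norm{qP+pP\circ\Gamma}^2\ge\epsilon\,\norm{qP}^2$), but proving it uniformly over all $\ell_2$ coefficient sequences is a nontrivial oscillatory-integral problem in its own right, and you explicitly defer it (``I expect this \ldots{} to be the main obstacle''). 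As written, the central inequality of the theorem is therefore not proved.

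The paper closes this step by a much softer mechanism that your framing overlooks: total cancellation requires not only anti-alignment in the cross term but also \emph{equality of the two norms}, and it is the latter that the hypothesis on $\varphi'$ destroys quantitatively. Working with the weight $\omega_a(t)=1/\varphi'(t{-}a)$ on $[0,\tau_a]$, $\tau_a=a+\gamma(-a)$, the anchor term $A$ has weighted norm exactly $4\pi^2\sum_n n^2|A_n|^2$ by Lemma~\ref{lem:ONS}, while the shifted term $B$, after the substitution $s=t{+}a$, is supported on $[a,\,2a+\gamma(-a)]\subset[a,\gamma(a)]$ (here monotonicity of $s$, i.e.\ $\gamma'\ge1$, enters) and carries the extra factor $\varphi'(t{+}a)/\varphi'(t{-}a)\le q_a<1$, strict precisely because $\varphi'$ is strictly monotone; Bessel's inequality then gives $B\le\sqrt{q_a}\,A$. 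The reverse triangle inequality alone yields the lower bound with the factor $(1-\sqrt{q_a})^2$ --- no analysis of the phase twist $\Gamma$ or of the cross term is needed. To complete your argument, replace the planned Cauchy--Schwarz-defect estimate by this norm-domination step.
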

\begin{proof}
  Let $t_1 = \beta^{-1}(-a)$ and $\tau_a = a + \gamma(-a)$.
  Term by term differentiation of (\ref{eq:solution-as-series}) with
  respect to $t$ gives
\[
u_t(a,t)  
= 
2 \pi i \sum_{n\in \ZZ} n \, A_n \Bigl(e^{2\pi i n \,\varphi(t+a)}\varphi'(t+a) + e^{2\pi i n \,\varphi(t-a)}\varphi'(t-a)\Bigr) 
\]
First we suppose that $\varphi'$ is strictly decreasing.
We first calculate a weighted $L_2$-norm with $\omega_a(t) = \tfrac{1}{\varphi'(t-a)}$:
\[
A - B \quad \le \quad \norm{u_x(a,t)}_{L_2(0,\tau_a,\omega_a(t)\dt)} \quad \le \quad A + B
\]
with 
\begin{align*}
A :=  & \;  2\pi \Bignorm{ \sum_{n\in \ZZ} n \, A_n e^{2\pi i n \,\varphi(t-a)}\varphi'(t-a) }_{L_2(0,\tau_a,\omega_a(t)\dt)} \\
B := & \;   2\pi  \Bignorm{ \sum_{n\in \ZZ} n\, A_n e^{2\pi i n \,\varphi(t+a)}\varphi'(t+a) }_{L_2(0,\tau_a,\omega_a(t)\dt)}.
\end{align*}
To estimate $A$, the change of variables $s=t-a$ together with
Lemma~\ref{lem:ONS} therefore gives
\[ 
    A^2 = 4\pi^2 \sum_{n\in \ZZ} n^2|A_n|^2.
\]       
For $B$, we have
\[
B^2 =4\pi^2 \int_{0}^{\tau_a} \Bigl|\sum_{n\in \ZZ} n \, A_n(e^{2\pi i n \,\varphi(t+a)}\varphi'(t+a)) \Bigr|^2 \omega_a(t)\dt
\]
Since $\varphi'$ is strictly decreasing,
$0 < \tfrac{\varphi'(t+a)}{\varphi'(t-a)} < 1$ for all
$t \in [0, \tau_a]$ and so
$q_a := \max_{[0, \tau_a]} \tfrac{\varphi'(t+a)}{\varphi'(t-a)} < 1$.
We then have
\begin{align*}
B^2 \leq & \; 4\pi^2  q_a
         \int_{0}^{\tau_a} \Bigl|\sum_{n\in \ZZ} n\, A_n e^{2\pi i n \,\varphi(t+a)}\varphi'(t+a)) \Bigr|^2 \tfrac{1}{\varphi'(t+a)} \dt\\
= & \; 4\pi^2 q_a
\int_{a}^{a+\tau_a}  \Bigl|\sum_{n\in \ZZ} n\, A_n e^{2\pi i n \,\varphi(s)} \Bigr|^2 {\varphi'(s)} \ds
\end{align*}
Recall that $a+\tau_a = 2a + \gamma(-a)$. Since $s'\ge 0$, we have
$\gamma' \ge 1$ and so $2a + \gamma(-a) \le \gamma(a)$.  By
Lemma~\ref{lem:ONS} we infer
\[B^2 \leq  4\pi^2 q_a
\int_{a}^{\gamma(a)}  \Bigl|\sum_{n\in \ZZ} n\, A_n e^{2\pi i n \,\varphi(s)} \Bigr|^2 {\varphi'(s)} \ds =  4 \pi^2 q_a \sum_{n\in \ZZ} n^2|A_n|^2.
\]
Putting both on $A$ and $B$ estimates together, and using Proposition~\ref{prop:norm-equivalence-f-g}, we get the lower estimate
\begin{align*}
       \norm{u_t(a, t) }_{L_2(0, \tau_a)}^2
\geq & \; m(t_1)   \norm{u_t(a,t)}_{L_2(0,\tau_a,\omega_a(t)\dt)}^2 \\
\geq & \; 4\pi^2 m(t_1) (1{-}\sqrt{q_a})^2  \sum_{n\in \ZZ} n^2|A_n|^2 \\
\geq & \; C_1(a)   \; \bignorm{(g,f)}_{H_0^1 \times L_2 }^2
\end{align*}
with $c_1(a) = \tfrac{m(t_1)}{2M(0)} (1{-}\sqrt{q_a})^2$.
The upper estimate is similar; we find $C_2(a) =  \tfrac{M(t_1)}{2m(0)} (1{+}\sqrt{q_a})^2$.

In the case where $\varphi'$ is strictly increasing we use
$\widetilde{\omega_a}(t) = \tfrac{1}{\varphi'(t+a)}$ as a weight function and 
change the rôles of $A$ and $B$. The result follows the same lines then.
\end{proof}

We observe that the same proof also gives the double inequality
\[
C_1(a) \; \bignorm{(g,f)}_{H_0^1 \times L_2}^2
\quad \le \quad  
 \int_0^{a+\gamma(-a)} \bigl| u_t(a,t) \bigr|^2 \dt 
\quad \le \quad  
C_2(a) \; \bignorm{(g,f)}_{H_0^1 \times L_2}^2.
\]

\subsection*{Discussion} One may formulate
(\ref{eq:1D-wave-eq-linear-bdry}) as an abstract non-autonomous Cauchy
problem, for example as follows: let $H_t = L_2([0, s(t)])$ and define
\[
\DOMAIN(A(t)) = H^1_0([0, s(t)] \cap H^2([0, s(t)])
\qquad\text{and}\qquad
A(t) f = f''
\]
Then $A(t)$ is the generator of an analytic semigroup on $H_t$. For $t \ge 0$, 
we let $\calH_t = H_0^1([0, s(t)]) \times L_2([0, s(t)])$ and
\[
\DOMAIN(\calA(t)) = \DOMAIN(A(t)) \times  H_0^1([0, s(t)])
\qquad\text{and}\qquad
   \calA(t) = \Bigl( 
\begin{array}[rl]{cc}
  0 & I \\ A(t) & 0 
\end{array}\Bigr).
\]
With this notation (\ref{eq:1D-wave-eq-linear-bdry}) rewrites as
\begin{equation}  \label{eq:abstract-nonaut-pb}\left\{
  \begin{split}
        x'(t) = & \; \calA(t) x(t) \\
        x(0) = & \; x_0 = (g, f) \in \calH_0.  
  \end{split}\right.
\end{equation}
The observation of $t \mapsto u_x(a, t)$ discussed in the theorem is
then realised with observation operators
$C(t) : \DOMAIN(\calA(t)) \to \CC$ defined by
$C(t) ({v}, {w})^t = v_x(a)$.  Theorem~\ref{thm:obs-on-internal-point}
states in particular exact observability on $[0, \tau]$ if and only if
$\tau \ge a+\gamma(-a)$. It is remarkable that this holds true,
although, for a dense subset of values of $t_0$ (precisely if
$a/s(t_0) \in \QQ$) the ``frozen'' evolution equations
\[
   x'(t) + \calA(t_0) x(t) = 0 \qquad y(t) = C(t) x(t)
\]
are {\em not} exactly observable by the sine-series argument given
above for the case $s(t)=1$. This could now lead to the intuition that
the non-observability on for all $t>0$ such that $a/s(t) \in \QQ$ is
an ``almost everywhere phenomenon'', and may be ignored. This idea is
partially contradicted by the following result, where the 
observation position depends on time and may be such that the ratio
$a(t) / s(t) \in \QQ$ for all $t>0$.

\begin{theorem}\label{thm:obs-by-movin-interval-point}
  Let $s(t) = 1+\varepsilon t$ and $a(t) = a s(t)$ for some $a \in (0,1)$.
  Then the solution $u$ to the wave equation {\rm
    (\ref{eq:1D-wave-eq-linear-bdry}) } satisfies the following
  admissibility and observation inequality:
\[ 
C_1(a, \varepsilon) 
\; \bignorm{ (g,f)}_{H_0^1 \times L_2}^2 
\quad \le \quad
\int_0^{\frac{2}{1{-}\varepsilon} } \bigl| u_t(a(t), t) \bigr|^2  \dt
\quad \le \quad
C_2(a, \varepsilon) 
\; \bignorm{ (g,f)}_{H_0^1 \times L_2}^2 
\]
The constants $C_1$ and $C_2$ depend only on $a$ and $\varepsilon$.
We provide them explicitly in the proof. 
\end{theorem}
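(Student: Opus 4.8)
The plan is to follow the scheme of the proof of Theorem~\ref{thm:obs-on-internal-point}, taking advantage of the fact that for the linear wall $s(t)=1+\varepsilon t$ the solution of the functional equation is explicit, $\varphi(t)=\tfrac{1}{\eta_\varepsilon}\ln(1+\varepsilon t)$ with $\eta_\varepsilon:=\ln\tfrac{1+\varepsilon}{1-\varepsilon}$, so that $\varphi'(y)=\tfrac{\varepsilon}{\eta_\varepsilon(1+\varepsilon y)}$ is explicit and strictly decreasing. Differentiating (\ref{eq:solution-as-series}) term by term in $t$ and evaluating at the moving point $x=a(t)=a(1+\varepsilon t)$ gives $u_t(a(t),t)=Q(t)-P(t)$, where
\[
P(t)=2\pi i\sum_{n\in\ZZ} nA_n\,\varphi'(t-a(t))\,e^{2\pi in\,\varphi(t-a(t))},\qquad
Q(t)=2\pi i\sum_{n\in\ZZ} nA_n\,\varphi'(t+a(t))\,e^{2\pi in\,\varphi(t+a(t))}.
\]
Since $\bignorm{(g,f)}_{H_0^1\times L_2}^2$ is comparable to $\sum_{n}n^2|A_n|^2$ by Proposition~\ref{prop:norm-equivalence-f-g}, it is enough to bound the observation integral from above and below by a fixed multiple of $\sum_n n^2|A_n|^2$.

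The structural fact that makes this work --- and which is special to a linear wall together with a proportional observer $a(t)=a\,s(t)$ --- is that the ratio of the two weights is \emph{constant}: from $1+\varepsilon\bigl(t\pm a(t)\bigr)=s(t)\,(1\pm\varepsilon a)$ one gets $\tfrac{\varphi'(t+a(t))}{\varphi'(t-a(t))}=\tfrac{1-\varepsilon a}{1+\varepsilon a}=:q\in(0,1)$ for every $t$. I would therefore work in the weighted space $L_2\bigl(0,\tau,\omega(t)\,\dt\bigr)$ with $\tau=\tfrac{2}{1-\varepsilon}$ and $\omega(t)=\tfrac{1-\varepsilon a}{\varphi'(t-a(t))}$. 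The substitutions $y=t-a(t)=(1-\varepsilon a)t-a$ for $P$ and $z=t+a(t)=(1+\varepsilon a)t+a$ for $Q$ are affine, with constant Jacobians $1-\varepsilon a$ and $1+\varepsilon a$. A short computation with the explicit $\varphi$ shows that, as $t$ runs over $[0,\tau]$, both phases $\varphi(t-a(t))$ and $\varphi(t+a(t))$ sweep out an interval of length exactly $1$: at $t=\tau$ one finds $1+\varepsilon y=(1-\varepsilon a)\tfrac{1+\varepsilon}{1-\varepsilon}$ and $1+\varepsilon z=(1+\varepsilon a)\tfrac{1+\varepsilon}{1-\varepsilon}$, so $\varphi$ grows by $\tfrac{1}{\eta_\varepsilon}\ln\tfrac{1+\varepsilon}{1-\varepsilon}=1$ along each. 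This is exactly what pins the observation time to $\tfrac{2}{1-\varepsilon}$. Lemma~\ref{lem:ONS} then applies to each term over its unit interval and, using the constant weight ratio $q$ and the Jacobian ratio $\tfrac{1-\varepsilon a}{1+\varepsilon a}=q$, yields the clean identities
\[
\bignorm{P}_{L_2(0,\tau,\omega\,\dt)}^2=4\pi^2\sum_{n\in\ZZ} n^2|A_n|^2,
\qquad
\bignorm{Q}_{L_2(0,\tau,\omega\,\dt)}^2=4\pi^2 q^2\sum_{n\in\ZZ} n^2|A_n|^2.
\]

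With these two identities the triangle inequality in $L_2(0,\tau,\omega\,\dt)$ gives
\[
2\pi(1-q)\Bigl(\sum_{n}n^2|A_n|^2\Bigr)^{1/2}
\;\le\;\bignorm{u_t(a(\cdot),\cdot)}_{L_2(0,\tau,\omega\,\dt)}\;\le\;
2\pi(1+q)\Bigl(\sum_{n}n^2|A_n|^2\Bigr)^{1/2},
\]
the lower bound being nontrivial precisely because $q<1$. To finish I would remove the weight by bounding $\omega$ between its extreme values, which are explicit since $\varphi'$ is monotone on the compact range of $t-a(t)$, and then convert $\sum_n n^2|A_n|^2$ back into $\bignorm{(g,f)}_{H_0^1\times L_2}^2$ by Proposition~\ref{prop:norm-equivalence-f-g}; collecting the factors produces explicit constants $C_1(a,\varepsilon)$ and $C_2(a,\varepsilon)$. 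The one genuinely delicate point is verifying the two ``miracles'' of the linear-plus-proportional geometry --- that the weight ratio is constant and that both phases traverse a full unit $\varphi$-interval over exactly $[0,\tfrac{2}{1-\varepsilon}]$. Once these are in place the argument runs parallel to, and is in fact cleaner than, that of Theorem~\ref{thm:obs-on-internal-point}, where only the $P$-term reduced to an orthonormal-system integral while the $Q$-term merely had to be dominated by a contraction factor $q_a<1$.
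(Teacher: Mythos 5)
Your argument is correct, and all the computations check out (the constant ratio $\varphi'(t+a(t))/\varphi'(t-a(t))=\tfrac{1-\varepsilon a}{1+\varepsilon a}=q$, the unit $\varphi$-increment of both phases over $[0,\tfrac{2}{1-\varepsilon}]$, and the exact values of $\lVert P\rVert$ and $\lVert Q\rVert$ are all right; note that the two intervals $[-a,\,y(\tau)]$ and $[a,\,z(\tau)]$ are indeed of the form $[t_0-s(t_0),t_0+s(t_0)]$ for suitable $t_0$, so Lemma~\ref{lem:ONS} applies verbatim). However, the route differs from the paper's. The paper does not split into two travelling waves and use the triangle inequality; instead it exploits the identities $\varphi(t\pm a(1+\varepsilon t))=\varphi(t)+\varphi(\pm a)$ and $\varphi'(t\pm a(1+\varepsilon t))=\tfrac1\varepsilon\varphi'(t)\varphi'(\pm a)$ --- the same algebraic facts underlying your two ``miracles'' --- to collapse the \emph{entire} sum onto the single orthonormal system $e^{2\pi in\varphi(t)}$ on $[0,\tfrac{2}{1-\varepsilon}]$ with an $n$-dependent multiplier $M_n=\bigl|e^{2\pi in\varphi(a)}\varphi'(a)-e^{2\pi in\varphi(-a)}\varphi'(-a)\bigr|$. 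The weighted norm is then computed \emph{exactly} as $\tfrac{4\pi^2}{\varepsilon^2}\sum_n n^2|A_n|^2M_n^2$, and the two-sided bound comes from $|\varphi'(a)-\varphi'(-a)|\le M_n\le\varphi'(a)+\varphi'(-a)$. That version is frequency-resolved (it shows precisely which modes are weakly observed, namely those with $\cos(2\pi n(\varphi(a)-\varphi(-a)))$ near $1$) and yields the constants stated in the paper, whereas your $P$/$Q$ decomposition with the reverse triangle inequality discards this information and produces constants of the same order but not identical (your lower factor is $(1-q)^2$ with $1-q=\tfrac{2\varepsilon a}{1+\varepsilon a}$, versus the paper's $(\varphi'(a)-\varphi'(-a))^2$ up to the weight normalisation). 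On the other hand, your scheme is exactly the one used for Theorem~\ref{thm:obs-on-internal-point} and is more robust: it only needs the weight ratio to be bounded away from $1$ and the two phases to sweep unit $\varphi$-intervals, not the exact additive phase splitting. Both proofs are valid; just state your final constants explicitly, since the theorem promises them.
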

\begin{proof} 
  Recall that the solution $u$ of the equation
  (\ref{eq:1D-wave-eq-linear-bdry}) can be written in the form (\ref{eq:solution-as-series}).
  Taking the derivative respected to $t$ gives
\[
 u_t(x,t) = 2\pi i \sum_{n\in \ZZ}  n A_n  \,
   \Bigl(e^{2\pi i n \,\varphi(t{+}x)} \varphi'(t{+}x)- 
         e^{2\pi i n \,\varphi(t{-}x)}\varphi'(t{-}x)\Bigr)
\]
Substituting $x = a(t)$, we get 
\[
 u_t(a(t),t) = 2\pi i 
 \sum_{n\in \ZZ} n A_n\, \Bigl(
e^{2\pi i n \,\varphi(t+a(1{+}\varepsilon t))} \varphi'(t+a(1{+}\varepsilon t))  - 
e^{2\pi i n \,\varphi(t-a(1{+}\varepsilon t))}\varphi'(t-a(1{+}\varepsilon t))\Bigr)
\]
By calculation, we have the followings identities
\begin{align*}
  \varphi(t\pm a(1{+}\varepsilon t)) & = \; \varphi(t)+\varphi(\pm a)\\
\varphi_t(t\pm a(1{+}\varepsilon t)) &= \;\tfrac{1}{\varepsilon}\varphi'(t)\varphi'(\pm a)
\end{align*}
Plugging them into the preceding equation we get  
\begin{align*}
 u_t(a(t),t)\;  
& = \quad \tfrac{2\pi i}{\varepsilon}
\sum_{n\in \ZZ}A_n \Bigl(e^{2\pi i n \,(\varphi(t)+\varphi( a))} \varphi'(t)\varphi'( a)- e^{2\pi i n \,(\varphi(t)+\varphi(- a))}\varphi'(t)\varphi'(- a))\Bigr) \\
& = \quad \tfrac{2\pi i}{\varepsilon} \sum_{n\in \ZZ}A_n  e^{2\pi i n \,\varphi(t)}\varphi'(t)
      \Bigl(e^{2\pi i n \,\varphi( a)}\varphi'( a)- e^{2\pi i n \,\varphi(- a)}\varphi'(- a)\Bigr)
\end{align*}
Let $t_0 = \tfrac{1}{1{-}\varepsilon}$. Then $[t_0{-}s(t_0), t_0{+}s(t_0) = [0, \tfrac{2}{1{-}\varepsilon}]$ 
and so, using Lemma~\ref{lem:ONS}, 
\begin{align*}
  & \; \bignorm{u_t(a(t), t)}_{L_2(0,\frac{2}{1{-}\varepsilon},\frac{1}{\varphi'(t)})}^2 \\
= & \; \tfrac{4\pi^2}{\varepsilon^2 } \int_{0}^{\frac{2}{1{-}\varepsilon}}
       \biggl| \sum_{n\in \ZZ}
       e^{2\pi i n \,\varphi(t)}\varphi'(t) \,  n A_n   \Bigl(e^{2\pi i n \,\varphi( a)}\varphi'( a)- 
       e^{2\pi i n \,\varphi(- a)}\varphi'(- a)\Bigr)\biggr|^2 \tfrac{1}{\varphi'(t)} \dt \\
= & \; \tfrac{4 \pi^2}{\varepsilon^2 }\sum_{n\in \ZZ} n^2 |A_n|^2 
          \bigl|e^{2\pi i n \,\varphi( a)}\varphi'( a) - 
                e^{2\pi i n \,\varphi(- a)}\varphi'(- a))\bigr|^2 
\end{align*}
Now we need to estimate the multiplicative term
\begin{align*}
M_n^2 =  & \; \bigl|  e^{2\pi i n \,\varphi( a)}\varphi'( a)- e^{2\pi i n \,\varphi(- a)}\varphi'(- a)) \bigr|^2 \\
= & \;  \varphi'( a)^2+\varphi'(- a)^2 - 2\varphi'( a)\varphi'(- a)\cos\Bigl( 2\pi n(\varphi( a) - \varphi(- a))\Bigr).
\end{align*}
Clearly,
$(\varphi'( a)-\varphi'(- a))^2 \le M_n^2 \le
(\varphi'( a)+\varphi'(- a))^2$ ; by direct calculation,
\[
(\varphi'( a)-\varphi'(- a))^2 \; =  \; \eta_\varepsilon^{-2} \frac{4\varepsilon^4 a^2}{(1{-}\varepsilon^2 a^2)^2}
\quad\text{and}\quad
(\varphi'( a)+\varphi'(- a))^2 \; =  \; \eta_\varepsilon^{-2} \frac{4\varepsilon^2}{(1{-}\varepsilon^2 a^2)^2}
\]
Therefore, by Proposition~\ref{prop:norm-equivalence-f-g}, 
\[
     \tfrac{16\pi^2\varepsilon^2 a^2 }{(1{-}\varepsilon^2 a^2)^2 \eta_\varepsilon^2}  
               \sum_{n\in \ZZ} n^2 |A_n|^2
\quad\le\quad
     \norm{u_t(a(t), t)}_{L_2(0,\frac{2}{1{-}\varepsilon},\frac{1}{\varphi'(t)})}^2  
\quad\le \quad  
     \tfrac{16\pi^2}{(1{-}\varepsilon^2 a^2 )^2\eta_\varepsilon^2}
               \sum_{n\in \ZZ} n^2 |A_n|^2
\]
Now we apply Proposition~\ref{prop:norm-equivalence-f-g} to conclude.
We find
\[
C_1(a, \varepsilon) = \tfrac{1-\varepsilon}{1+\varepsilon} 
                      \tfrac{2 \varepsilon^2 a^2 }{(1{-}\varepsilon^2
  a^2)^2 \eta_\varepsilon^2} 
\quad\text{and}\quad
C_2(a, \varepsilon) = \tfrac{1+\varepsilon}{1-\varepsilon} 
                     \tfrac{2 }{(1{-}\varepsilon^2  a^2)^2 \eta_\varepsilon^2} .\qedhere
\]
\end{proof}

\bigskip

\subsection{Simultaneous exact observability}
A last result in this section concerns simultaneous exact observability : consider a
system of two coupled 1D wave equations, one of which has a fixed
boundary, and the second has the moving domain $0 \le x \le s(t)$ as
above.  Assume that we can observe only the combined force exerted by
the strings at the common endpoint
$\varphi(t) = u_x^{(1)}(0,t) + u_x^{(2)}(0,t)$, for $t \in [0,T]$. The
question is whether we can still exactly observe all initial data. Our
system is defined as
\begin{equation}
  \label{eq:coupled wave eq}\tag{$W_2$}
  \left\{
    \begin{array}{ll}
    u_{tt} - u_{xx} = 0  \qquad                        & (x, t) \in \Omega\\
    v_{tt} - v_{xx} = 0  \qquad                        & -1 \leq x \leq 0 \\
    u(0, t) = u(s(t), t) =  v(-1, t) =  v(0, t) = 0 \qquad     & t \ge 0\\
    u(x,0) = g(x)  , u_t(x,0) = f(x)               \qquad                & x \in [0,1]\\
    v(x,0) = \widetilde{g}(x)  , v_t(x,0) = \widetilde{f}(x)         \qquad                & x \in [{-}1, 0]
    \end{array}
  \right.
\end{equation}
\begin{theorem}\label{thm:simultaneous-obs}
  Let $s(\cdot)$ be an admissible boundary curve and assume additionally  that either
  \[
   \liminf_{t\to \infty} \gamma'(t) > 1 
      \qquad \text{or} \qquad
   \gamma'(t) = 1 + a x^{-\delta} + \LANDAUo(t^{-\delta}), \quad 0< \delta< 1, a>0.
\]
  Moreover assume that $\varphi'$ is bounded on $\RR_+$.
  Let $(u, v)$ be the solution to {\rm (\ref{eq:coupled wave
    eq})}. Then, for all $\lambda>0$ there exists
  $\tau_0>2$ such that for all   $\tau \geq \tau_0$
\begin{equation}
  \lambda \Bigl( \bignorm{ (g, f)}_{H_{1}^0 \times L_1}^2 
              + \bignorm{( \widetilde{g}, \widetilde{f} ) }_{H_{1}^0 \times L_2}^2  \Bigr) 
\quad \leq \quad 
\int_{0}^{\tau} \bigl|u_x(0,t) + v_x(0,t)  \bigr|^2 dt
\end{equation}
\end{theorem}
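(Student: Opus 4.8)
The plan is to write the observation as $w(t) := u_x(0,t) + v_x(0,t) = P(t) + Q(t)$, with $P := u_x(0,\cdot)$ coming from the moving string and $Q := v_x(0,\cdot)$ from the fixed one, and to exploit that these two signals occupy asymptotically disjoint frequency ranges. From the proof of Theorem~\ref{thm:obs-on-left-bdry} I have $P(t) = 4\pi i\,\varphi'(t)\sum_{n\in\ZZ} n\,A_n\,e^{2\pi i n\varphi(t)}$, whereas separation of variables on the fixed string $[-1,0]$ (round–trip time $2$) produces a genuinely $2$–periodic signal $Q(t) = \sum_{k\in\ZZ} B_k\,e^{i\pi k t}$ with $\bignorm{(\widetilde g,\widetilde f)}_{H_0^1\times L_2}^2 \asymp \sum_k |B_k|^2$. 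First I would record the two one–sided bounds that are essentially free. By $2$–periodicity and Parseval over an integer number of periods (i.e. classical observability of the fixed string in time $2$), $\int_0^\tau |Q|^2\,\dt \ge c_1\,\tau\,\bignorm{(\widetilde g,\widetilde f)}_{H_0^1\times L_2}^2$ for $\tau\ge 2$, so the fixed–string energy grows linearly in $\tau$; and Theorem~\ref{thm:obs-on-left-bdry} gives $\int_0^\tau |P|^2\,\dt \ge c_2\,\bignorm{(g,f)}_{H_0^1\times L_2}^2$ once $\tau\ge\gamma(0)$. It is precisely the linear growth of the first bound that will furnish the arbitrarily large constant $\lambda$.

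Expanding the square, $\int_0^\tau |w|^2\,\dt = \int_0^\tau |P|^2\,\dt + \int_0^\tau |Q|^2\,\dt + 2\,\mathrm{Re}\int_0^\tau P\,\overline Q\,\dt$, so everything reduces to controlling the cross term $\int_0^\tau P\,\overline Q\,\dt = 4\pi i\sum_{n,k} n\,A_n\,\overline{B_k}\,I_{n,k}(\tau)$, where $I_{n,k}(\tau) = \int_0^\tau \varphi'(t)\,e^{\,i(2\pi n\varphi(t)-\pi k t)}\,\dt$ has phase derivative $\pi\bigl(2n\varphi'(t)-k\bigr)$. This is where the hypotheses enter: since $\gamma'=\varphi'/(\varphi'\circ\gamma)$, the assumption $\liminf_{t\to\infty}\gamma'(t)>1$, or the borderline rate $\gamma'(t)=1+a t^{-\delta}+\LANDAUo(t^{-\delta})$ with $0<\delta<1$, forces $\varphi'$ to decay along the orbits of $\gamma$ (for $\delta<1$ the decay accumulates), so that $\varphi'(t)\to 0$; together with $\varphi'$ bounded this keeps the amplitude under control. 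Consequently, for each fixed $k\neq 0$ the phase derivative is eventually bounded away from $0$, and an integration–by–parts / stationary–phase estimate should bound $|I_{n,k}(\tau)|$ uniformly in $\tau$ with enough decay in $(n,k)$ to pass a Schur test, yielding $\bigl|\int_0^\tau P\,\overline Q\,\dt\bigr| \le \kappa\,\bignorm{(g,f)}_{H_0^1\times L_2}\,\bignorm{(\widetilde g,\widetilde f)}_{H_0^1\times L_2}$ with $\kappa$ independent of $\tau$.

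With the cross term so controlled, the conclusion is bookkeeping. Young's inequality absorbs $2\kappa\,\bignorm{(g,f)}_{H_0^1\times L_2}\bignorm{(\widetilde g,\widetilde f)}_{H_0^1\times L_2}$ into $\tfrac{c_2}{2}\bignorm{(g,f)}_{H_0^1\times L_2}^2 + \tfrac{2\kappa^2}{c_2}\bignorm{(\widetilde g,\widetilde f)}_{H_0^1\times L_2}^2$, leaving $\int_0^\tau |w|^2\,\dt \ge \tfrac{c_2}{2}\bignorm{(g,f)}_{H_0^1\times L_2}^2 + \bigl(c_1\tau-\tfrac{2\kappa^2}{c_2}\bigr)\bignorm{(\widetilde g,\widetilde f)}_{H_0^1\times L_2}^2$. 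Given $\lambda>0$, I then pick $\tau_0>2$ so large that the coefficient $c_1\tau-\tfrac{2\kappa^2}{c_2}$ of the second energy exceeds $\lambda$; since the moving–string coefficient $c_2/2$ is a fixed positive multiple coming from Theorem~\ref{thm:obs-on-left-bdry}, the stated inequality follows for all $\tau\ge\tau_0$, and tracking $\kappa,c_1,c_2$ makes the constants explicit.

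The hard part is the uniform–in–$\tau$ estimate on the oscillatory integrals $I_{n,k}$ and the verification that the resulting double sum is dominated by the product of the two energies through a Schur test. The borderline regime $\delta<1$ is the delicate one: there the separation between the chirped frequencies $2\pi n\varphi'(t)$ and the stationary frequencies $\pi k$ degrades slowly, so one must check that the $(n,k)$–decay of $|I_{n,k}|$ still beats the summation, with boundedness of $\varphi'$ also securing the admissibility half (finiteness of $\int_0^\infty|P|^2$). A possibly cleaner alternative would be to regard the union of the two frequency families as a single non–harmonic system and invoke a Beurling–type generalization of Ingham's inequality, the hypotheses on $\gamma'$ guaranteeing the required asymptotic separation and density.
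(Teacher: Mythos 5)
Your architecture diverges from the paper's at the decisive point. The paper never touches the cross term: it applies the reverse triangle inequality $\bignorm{P+Q}_{L_2(0,\tau)} \ge \bignorm{Q}_{L_2(0,\tau)} - \bignorm{P}_{L_2(0,\tau)}$ and therefore only needs an \emph{upper} bound on $\int_0^\tau|u_x(0,t)|^2\dt$. That bound is obtained by cutting $[0,\tau]$ at the iterates $t_n=\gamma^{(n)}(0)$; on each piece $\varphi(t_{n+1})-\varphi(t_n)=1$, so Lemma~\ref{lem:ONS} applies and yields $\int_0^\tau|u_x(0,t)|^2\dt \le 16\pi^2 C\,(N(\tau)+1)\sum_n n^2|A_n|^2$ with $N(\tau)=\#\{n : t_n\le\tau\}$. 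The two hypotheses on $\gamma'$ enter \emph{only} here: they force $t_n$ to grow exponentially, resp.\ like $n^{1/\delta}$, hence $N(\tau)=\LANDAUO(\log\tau)$, resp.\ $\LANDAUO(\tau^{\delta})$ with $\delta<1$, so the subtracted term is $\LANDAUo(\tau)$ against the linear growth coming from the fixed string. You never use the hypotheses in this quantitative form, and this is the idea your write-up is missing.

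The genuine gap in your route is the asserted $\tau$-uniform bilinear bound $\bigl|\int_0^\tau P\overline{Q}\,\dt\bigr|\le\kappa\,\bignorm{(g,f)}_{H_0^1\times L_2}\bignorm{(\widetilde g,\widetilde f)}_{H_0^1\times L_2}$: it carries the entire weight of the proof, it is not established, and it is in fact doubtful. Under either hypothesis $\varphi'(t)\to0$ (indeed $\varphi(t_{n+1})-\varphi(t_n)=1$ while $t_{n+1}-t_n\to\infty$), so for each fixed-string frequency $\pi k$, $k\neq0$, and every sufficiently large $n$ the phase $2\pi n\varphi(t)-\pi kt$ acquires a stationary point inside $(0,\tau)$ once $\tau$ is large; there the integration by parts you propose degenerates and stationary phase gives $|I_{n,k}|\sim (n|\varphi''|)^{-1/2}$, which is not small, while the set of resonant pairs $(n,k)$ grows with $\tau$. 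A Schur test over this set against $\ell_2$ coefficients is not in reach from what you wrote, and the Beurling--Ingham alternative fails for the same reason: the chirped frequencies $2\pi n\varphi'(t)$ sweep through the whole lattice $\pi\ZZ$, so no separation condition holds uniformly in $t$. A secondary defect: even granting $\kappa$, your final display puts the fixed coefficient $c_2/2$ in front of $\bignorm{(g,f)}^2$, so you cannot reach an arbitrary $\lambda$ on that term as the statement demands. I recommend replacing the expansion of the square by the reverse triangle inequality and proving the $\LANDAUo(\tau)$ upper bound on $\int_0^\tau|u_x(0,t)|^2\dt$ via the partition at the points $\gamma^{(n)}(0)$.
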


Our assumptions include the cases of linear moving boundaries,
parabolic boundaries and hyperbolic boundaries.  However, for the
shrinking domain they are not satisfied.

\begin{proof} 
By the triangle inequality we have
\[
  \Bigl(\int_{0}^{\tau}\bigl|u_x(0,t) + v_x(0,t)  \bigr|^2 \dt\Bigr)^{\nicefrac{1}{2}} \geq A(\tau) - B(\tau)
\]
where 
\[
A(\tau)  = \Bigl(\int_{0}^{\tau}\bigl|v_x(0,t) \bigr|^2 \dt\Bigr)^{\nicefrac{1}{2}}
\quad \text{and} \quad
B(\tau) = \Bigl(\int_{0}^{\tau}\bigl|u_x(0,t) \bigr|^2 \dt\Bigr)^{\nicefrac{1}{2}}
\]
It is well known that the solution $v$ of the wave equation with
the fixed boundary can be expressed as a pure sine series
\begin{equation}\label{eq: express}
v(x,t) = \sum_{n \in \ZZ} a_ne^{\pi i n\, t} \; \sin\bigl( n\pi x\bigr),
\end{equation}
where $(n a_n )_{n\in \ZZ} \in \ell_2$ and hence
$(a_n)_{n\in\ZZ}, \in \ell_2$.  Consequently, for all $t \geq 0$, the
energy of $v$ is constant: indeed, by direct computation,
\[
E_{v}(t) = \tfrac{1}{2}\int_{0}^{1}\bigl(\tfrac{\partial v(x,t)}{\partial t}\bigr)^2 + \bigl(\tfrac{\partial v(x,t)}{\partial x}\bigr)^2 \dx 
             = \pi^2 \sum_{n\in \ZZ} n^2 a_n^2
\]
We also have
\[
\int_{0}^{2} \bigl|v_x(0,t) \bigr|^2 \dt
=   \; \int_{0}^{2} \Bigl|\sum_{n\in \ZZ} \pi n  a_n e^{i\pi n t} \cos\bigl( n\pi x\bigr)\Bigr|^2 \dt \\
=  \;  E(v)(0). 
\]
Hence, using periodicity of $v$, we obtain (recall $\tau\ge 2$) 
\[ 
A(\tau)^2 = \int_{0}^{\tau}\bigl|v_x(0,t) \bigr|^2 \dt \;\geq\;  \floor{\tfrac{\tau}{2}} \; E_{v}(0) 
\]
Next we turn to an estimate for $B(\tau)$. Recall that
\[
   u_x(0,t) = 2\pi i \sum_{n \in \ZZ}  n A_n  \varphi'(t) e^{ 2\pi i n \,  \varphi(t)}
\]
Let $t_0 = 0$ and $t_n = \gamma^{(n)}(t_0)$. By construction of $t_n$ and (\ref{eq:functional-equation}),
 \[
   \varphi(t_{n+1}) - \varphi(t_n) = \varphi( \gamma(t_{n})) - \varphi(t_n) =1.
 \]
 Hence, by Lemma~\ref{lem:ONS}, $e^{2\pi i n\,  \varphi(x)}$ is an
 orthonormal system on  $L_2([t_n, t_{n+1}], \varphi'(t)\dt)$.

 An inspection of the proof of Theorems~\ref{thm:abel-eq-1} and
 \ref{thm:abel-eq-2} shows that if
 $\liminf_{t\to \infty} \gamma' > 1$, $t_n \to + \infty$
 exponentially, whereas the asymptotics
 $ \gamma'(t) = 1 + a t^{-\delta} + \LANDAUo(t^{-\delta})$ ensures
 $t_n \sim c n^{\nicefrac{1}{\delta}}$. Let $N(\tau)$ be the unique
 integer satisfying $t_n \le \tau < t_{n+1}$.  Let
 $C = \sup\{ \varphi'(t) \SUCHTHAT t \ge 0 \}$.  Then
\begin{align*}
B(\tau) = \int_{0}^{\tau}\bigl|u_x(0,t) \bigr|^2 \dt
\leq  & \;  \int_{0}^{\tau}\bigl|u_x(0,t) \bigr|^2 \tfrac{1}{\varphi' (t)}\dt \\
\leq  & \; C \sum_{j=0}^{N(\tau)} \int_{t_j}^{t_{j+1}} \bigl|u_x(0,t) \bigr|^2 \tfrac{1}{\varphi' (t)}\dt \\
\leq  & \; 16\pi^2 C (N(\tau){+}1)  \sum_{n\in \ZZ} n^2 |A_n|^2 \\
\leq & \;  \tfrac{2C}{m(0)} (N(\tau){+}1)  \Bigl( \norm{g^{(1)}(x)}_{H_{1}^0(0,1)}^2 + \norm{f^{(1)}(x)}_{L_2(0,1)}^2 \Bigr).
\end{align*}
We obtained so far that 
\begin{align*}
     & \; \int_{0}^{\tau}\bigl|u_x(0,t) + v_x(0,t)  \bigr|^2 \dt \; 
\geq  \; A(\tau)^2 - B(\tau)^2 \\
\geq & \; \lfloor \tfrac{\tau}{2} \rfloor\; E_{v}(0)   -  \tfrac{2C}{m(0)} (N(\tau){+}1)  \Bigl( \norm{g^{(1)}(x)}_{H_{1}^0(0,1)}^2 + \norm{f^{(1)}(x)}_{L_2(0,1)}^2 \Bigr) 
\end{align*}
The first term grows linearly in $\tau$. The second term is
$\LANDAUo(\tau)$ since in case of exponential growth of the sequence
$t_n$, $N(\tau)$ behaves logarithmically and in case that
$t_n \sim c n^{\nicefrac{1}{\delta}}$, $N(\tau) \sim \tau^\delta$ with
$\delta < 1$.  Hence, the difference tends to infinity with
$\tau\to +\infty$, which means that for all $\lambda>0$ there exists
$\tau_0>0$ such that for $\tau \ge \tau_0$,
\begin{align*}
 \int_{0}^{\tau}\bigl|u_x(0,t) + v_x(0,t)  \bigr|^2 \dt \; 
\ge  & \; 2 \lambda\bigl( E(u)(0) +  E_{v}(0) \bigr)\\
=    & \; \lambda \Bigl( \bignorm{ (g, f)  }_{H_0^1\times L_2}^2  +  \bignorm{ (\widetilde{g}, \widetilde{f})  }_{H_0^1\times L_2}^2 \Bigr).\qedhere
\end{align*}
\end{proof}

\bigskip

\subsection{Duality results}\label{sec:duality}

Without detailed proofs we state dual results to our results
formulated as null-controllability in the sense of 'transposition'.

\subsubsection*{Dirichlet control on boundary} 
Let $s$ be an admissible boundary curve, $v$ the solution to the wave
equation on $\Omega$. Let $(G v)(t) = (v(0, t), v(s(t), t))$ be the
trace of $v$ on the two boundary points.  Then for either choice,
$\zeta(t) = (y(t), 0)$ or $\zeta(t) = (0, y(t))$ the boundary
controlled wave equation
\begin{equation}
  \label{eq:dual-problem-left}
  \left\{
    \begin{array}{rll}
      v_{tt} - v_{xx} & = 0  \qquad                        & (x, t) \in \Omega\\
      (G v)(t) & = \zeta(t)  \qquad     & t \ge 0\\
      v(x,0) & = g \in L_2([0,1])  \qquad       & x \in [0,1]\\
      v_t(x, 0) & = f \in H^{-1}([0,1]) \qquad       & x \in [0,1]
    \end{array}
  \right.
\end{equation}
is null-controllable in times $\tau = \gamma(0)$ in case  $\zeta(t) = (y(t), 0)$
and in time $\tau=\gamma^{-1}(0)$ in case $\zeta(t) = (0, y(t))$.
The null control can be achieved by the control function
$y(t) = {-}u_x(0, t)$, or $y(t) = {-}u_x(s(t), t)$, respectively
where $u(\cdot)$ is the solution to (\ref{eq:1D-wave-eq-linear-bdry}).

\subsubsection*{Simultaneous Null Control} 
Next we focus on the dual statement to
Theorem~\ref{thm:obs-on-internal-point} in terms of
null-controllability.  Instead of one wave equation on $\Omega$, we
consider two wave equations with mixed boundary conditions, one on the
cylindrical domain $[0, a] \times \RR_+$ and one on the
non-cylindrical domain $\{ (x, t) \SUCHTHAT a \le x \le s(t) \}$. Both
equations are coupled via the control function $\zeta$ in the
following way:
\begin{equation}\label{eq:coupled-control}
  \left\{
    \begin{array}{ll}
    v_{tt} - v_{xx} = 0  \qquad                        & 0 \leq x \leq a \\
    w_{tt} - w_{xx} = 0  \qquad                        & a \le x \le s(t) \\
    v(0, t) = w(s(t), t) = 0                         & t \ge 0 \\
    v(a{-}, t) = w(a{+}, t)                          & t \ge 0\\
    v_x(a{-}, t) - w_x(a{+}, t) = \zeta(t)           & t \ge 0\\
    v(x,0) = g(x), \quad v_t(x,0) = f(x)             & x \in [0,a]\\
    w(x,0) = g(x), \quad w_t(x,0) = f(x)             & x \in [a, 1]
    \end{array}
  \right.
\end{equation}
Then Theorem~\ref{thm:obs-on-internal-point} implies that
(\ref{eq:coupled-control}) is null-controllable in time
$\tau \ge a{+}\gamma({-}a)$. The control can be achieved by letting
$\zeta(t) = u_x(a, t)$ where $u(\cdot)$ is the solution to
(\ref{eq:1D-wave-eq-linear-bdry}).

\vspace*{2cm}

\appendix

\section{Differentiable solutions for general boundary functions}\label{sec:appendix}

\newcommand{\snull}{1}
\newcommand{\gammaofminusone}{1}

In this section we discuss the solvability of
(\ref{eq:functional-equation}) by a differentiable function $\varphi$.
Our hypotheses are that the boundary function $s$ be of class $\Ce^1$
at least and that $\lim_{t \to \infty} s'(t) = \cals$ exists. This
last condition is of course only of interest if we seek for solutions
$\varphi$ satisfying (\ref{eq:functional-equation}) for $t \in \RR_+$,
since it can easily be arranged if we consider only $t \in [0, \tau]$.

Let $s(\cdot)$ be of class $\Ce^1$ and $\norm{ s'}_\infty < 1$.  Let
$\alpha(t) = t + s(t)$ and $\beta(t) = t - s(t)$. Both functions,
$\alpha$ and $\beta$ are strictly increasing and continuous. Moreover,
$\alpha(t) = \alpha(0) + t \alpha'(\xi_t) > \alpha(0) + t
(1-\norm{s'}_\infty)$
yields $\lim_{t\to +\infty} \alpha(t) = +\infty$. Hence $\alpha$ is a
bijection from $[0, \infty)$ to $[{\snull}, \infty)$; similarly
$\beta$ is a bijection from $[0, \infty)$ to $[-{\snull}, \infty)$.
We then consider the bijection
\[
  \gamma := \alpha \circ \beta^{-1}: [{-}{\snull}, \infty) \to [{+}{\snull}, \infty).
\]
Observe that
\[
\gamma'(t) = \frac{\alpha' \circ \beta^{-1}}{\beta' \circ \beta^{-1}} 
           = \frac{ 1 + s'(\beta^{-1}(t)) } { 1 - s'(\beta^{-1}(t)) },
\]
so that $\gamma$ is strictly increasing by $\norm{s'}_\infty < 1$. The
sign of $s'(\beta^{-1}(t))$ determines whether $\gamma$ is strictly
contractive or strictly expansive. We also note for further reference
that if $s \in \Ce^2$,
\[
\gamma''(t) = \frac{2 s''(\beta^{-1}(t)) }{(1-s'(\beta^{-1}(t)))^3}.
\]
The functional equation (\ref{eq:functional-equation}) can now be
rephrased as
\begin{equation}  \label{eq:abels-equation}\tag{A}
   \varphi\circ \gamma =  \varphi + 1.
\end{equation}
This equation is known as 'Abel's equation' and intensively studied,
see for example \cite{Kuczma,KuczmaChoczewskiGer} and references therein.

We will consider only the case where $\lim s(t) = \cals$ exists.
Since $s(t) > 0 $ for all $t$, $\lim s(t) = \cals < 0$ is
impossible. We may therefore either have $\cals= 0$ or
$\cals \in (0,1)$.  We first discuss the situation of a non-zero
limit, which means that
$\gamma'(t) \to \ell = \tfrac{1+\cals}{1-\cals} > 1$.

\begin{theorem}\label{thm:abel-eq-1}
  Let $\ell> 1$ and assume that
  $\gamma'(x) = \ell + \LANDAUO(x^{-\delta})$ for $\delta>0$. Then
  Abel's equation (\ref{eq:abels-equation}) admits a strictly
  increasing solution $\varphi \in \Ce^1([{-}{\snull}, \infty))$.  If
  additionally $\gamma \in \Ce^2[0, \infty)$,
  $\gamma'' = \LANDAUO(x^{-1-\delta})$ and $\gamma'$ is decreasing,
  then $\varphi$ is of class $\Ce^2([{-}{\snull}, \infty))$.
\end{theorem}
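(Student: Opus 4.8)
The plan is to build $\varphi$ as the logarithm of a Koenigs (Schröder) linearizer for $\gamma$ at its repelling fixed point, which here has been pushed to $+\infty$. Since $\gamma'(x)\to\ell>1$ we have $\gamma(y)/y\to\ell$, so every forward orbit $y_n:=\gamma^{(n)}(x)$ escapes to $+\infty$ and should grow geometrically, $y_n\asymp\ell^{\,n}$. I would therefore set $\sigma(x):=\lim_{n\to\infty}\gamma^{(n)}(x)/\ell^{\,n}$ and then define $\varphi:=\log_\ell\sigma$. Granting convergence, the identity $\gamma^{(n)}(\gamma(x))=\gamma^{(n+1)}(x)$ immediately gives Schröder's equation $\sigma\circ\gamma=\ell\,\sigma$, whence $\varphi\circ\gamma=\log_\ell(\ell\,\sigma)=1+\varphi$, which is exactly (\ref{eq:abels-equation}). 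Strict monotonicity and regularity of $\varphi$ will then be inherited from those of $\sigma$.

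First I would prove convergence of $\sigma$ and of its derivative. Writing $\gamma^{(n)}(x)/\ell^{\,n}$ as the telescoping product $y_0\prod_{k=0}^{n-1}\tfrac{\gamma(y_k)}{\ell\,y_k}$, it suffices to show that $\sum_k\bigl|\log\tfrac{\gamma(y_k)}{\ell\,y_k}\bigr|$ converges locally uniformly in $x$. For this I would use that $r(y):=\gamma(y)-\ell y$ has $r'(y)=\gamma'(y)-\ell=\LANDAUO(y^{-\delta})$, so that $r(y)=\LANDAUO(y^{1-\delta})$ when $\delta<1$ and $r(y)=\LANDAUO(\log y)$ resp. $\LANDAUO(1)$ when $\delta=1$ resp. $\delta>1$; in every case $\tfrac{\gamma(y)}{\ell\,y}-1=\tfrac{r(y)}{\ell\,y}=\LANDAUO(y^{-\delta})$. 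Combined with $y_k\asymp\ell^{\,k}$, the terms are $\LANDAUO(\ell^{-k\delta})$ and hence summable, which simultaneously confirms $y_k\asymp\ell^{\,k}$ with a positive, locally bounded comparison constant. Applying the same telescoping to $(\gamma^{(n)})'(x)/\ell^{\,n}=\prod_{k=0}^{n-1}\tfrac{\gamma'(y_k)}{\ell}$, together with $\log\tfrac{\gamma'(y_k)}{\ell}=\LANDAUO(y_k^{-\delta})=\LANDAUO(\ell^{-k\delta})$, gives locally uniform convergence of the first derivatives to a strictly positive limit. Since functions and derivatives converge uniformly on compacts, $\sigma\in\Ce^1$ with $\sigma'>0$, and therefore $\varphi=\log_\ell\sigma\in\Ce^1([-1,\infty))$ is strictly increasing, which settles the first assertion.

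For the $\Ce^2$ statement I would differentiate once more and track $P_n:=(\gamma^{(n)})'(x)$ and $Q_n:=(\gamma^{(n)})''(x)$, which satisfy the chain-rule recursions $P_{n+1}=\gamma'(y_n)P_n$ and $Q_{n+1}=\gamma''(y_n)P_n^2+\gamma'(y_n)Q_n$. Setting $R_n:=Q_n/\ell^{\,n}$ turns the second recursion into the affine relation $R_{n+1}=\tfrac{\gamma'(y_n)}{\ell}R_n+\rho_n$ with forcing $\rho_n=\gamma''(y_n)P_n^2/\ell^{\,n+1}$. Here the hypothesis that $\gamma'$ is decreasing enters decisively: it forces $\gamma'\ge\ell$, hence $y_n\gtrsim\ell^{\,n}$ and $P_n\asymp\ell^{\,n}$, and it makes the coefficients $\gamma'(y_n)/\ell\ge1$ with convergent product. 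Using $P_n^2\asymp\ell^{\,2n}$ and $\gamma''(y_n)=\LANDAUO(y_n^{-1-\delta})=\LANDAUO(\ell^{-n(1+\delta)})$ yields $\rho_n=\LANDAUO(\ell^{-n\delta})$, which is summable; solving the affine recursion then shows $R_n=Q_n/\ell^{\,n}$ converges locally uniformly. Thus $\sigma\in\Ce^2$, and since $\sigma>0$ we conclude $\varphi=\log_\ell\sigma\in\Ce^2([-1,\infty))$.

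I expect the second-derivative step to be the main obstacle: the forcing term $\gamma''(y_n)P_n^2$ is amplified by the chain-rule factor $P_n^2\asymp\ell^{\,2n}$, and only the precise decay $\gamma''=\LANDAUO(x^{-1-\delta})$ together with the geometric lower bound $y_n\gtrsim\ell^{\,n}$ (guaranteed by $\gamma'$ decreasing) tames it down to the summable rate $\ell^{-n\delta}$. Everything else is a matter of keeping all the $\LANDAUO$-estimates uniform on compact subsets of $[-1,\infty)$, so that the limit functions are genuinely $\Ce^1$ (resp. $\Ce^2$) up to the left endpoint.
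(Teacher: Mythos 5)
Your proof is correct, and it rests on the same two pillars as the paper's: the substitution turning Abel's equation into Schr\"oder's equation $\psi\circ\gamma=\ell\,\psi$, and the convergence of the product $\prod_{k}\gamma'(\gamma^{(k)}(x))/\ell$ along orbits that escape to $+\infty$ geometrically. The construction of the Schr\"oder solution, however, is genuinely different. The paper first builds $P(x)=\prod_{n\ge0}\gamma'(\gamma^{(n)}(x))/\ell$, notes $P\circ\gamma=\tfrac{\ell}{\gamma'}P$, and obtains $\psi$ as $\int_{1}^{x}P+C$ with the additive constant tuned so that the Schr\"oder identity holds exactly; your Koenigs limit $\sigma(x)=\lim_n\gamma^{(n)}(x)/\ell^{\,n}$ satisfies Schr\"oder's equation automatically, with no constant to adjust, and its derivative is precisely the paper's $P$. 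The price is an extra zeroth-order convergence argument, where three small points deserve care: (i) your telescoping product $y_0\prod_{k<n}\gamma(y_k)/(\ell y_k)$ degenerates when $y_0=x\le 0$, which does occur on $[-1,0]$, so anchor it at $y_1=\gamma(x)\ge 1$ instead; (ii) the claim $y_k\asymp\ell^{\,k}$ must be bootstrapped from the cruder bound $y_k\gtrsim\bigl(\tfrac{1+\ell}{2}\bigr)^{k}$ (valid once the orbit passes the threshold where $\gamma'\ge\tfrac{1+\ell}{2}$), exactly as the paper does, before the convergent product upgrades it; and (iii) for $\delta\ge1$ your integration only gives $\gamma(y)/(\ell y)-1=\mathcal{O}(y^{-1})$ resp.\ $\mathcal{O}(y^{-1}\log y)$, not $\mathcal{O}(y^{-\delta})$, though this is still summable along a geometric orbit, so nothing breaks. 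Your $\Ce^2$ step via the affine recursion for $Q_n/\ell^{\,n}$ is a sound and arguably more transparent repackaging of the paper's termwise differentiation of the infinite product; both hinge on the same cancellation $\gamma''(y_n)P_n^2/\ell^{\,n}=\mathcal{O}(\ell^{-n\delta})$.
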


\begin{proof}[Proof of Theorem~\ref{thm:abel-eq-1}]
  Put $\psi = \ell^\varphi$. Then $\psi$ satisfies the Schröder equation
  $\psi \circ \gamma = \ell \psi$.  Since $\gamma({-}{\snull})={+}{\snull}$ and $\gamma$
  has no fixed points (otherwise $s(t)=0$), $\gamma(x) > x $ for all
  $x\ge {-}{\snull}$.  Observe that by assumption, there exists some $\xi> 0$
  such that $\gamma'(x) \ge \tfrac{1+\ell}2 > 1$ for all $x \ge \xi$. Let
  $a_0 = {-}{\snull}$ and $a_n = \gamma^{(n)}(a_0)$. If $(a_n)$ were bounded,
  we could extract a subsequence that converges to a fixed point of
  $\gamma$. So $a_n \to \infty$. Let $k$ be such that
  $a_k > \xi$. Hence
\[
a_{n+k+1} - \xi 
\; \ge \; \gamma(a_{n+k}) - \gamma(\xi) 
\;  >  \; \tfrac{1+\ell}2 (g_{n+k}- \xi) 
\]
shows that $a_n \to + \infty$ exponentially. By monotonicity of
$\gamma$ we infer the same for $\gamma^{(n)}(x) \ge a_n$
for all $x \ge {-}{\snull}$. This, together with
$\gamma'(x) = \ell + \LANDAUO(x^{-\delta})$ shows that
\[
    P(x) = \prod_{n=0}^\infty \frac{ \gamma'( \gamma^{(n)}(x) ) }{\ell}
\]
converges absolutely and uniformly on $[{-}{\snull}, \infty)$. $P$ vanishes
nowhere and satisfies  $P \circ \gamma = \frac{\ell}{\gamma'} P$. We define
\[
   \psi(x) := \int_{{\snull}}^x P(t)\,dt + C
\]
where the constant $C$ is to be determined. By construction, $\psi$ is
strictly increasing and satisfies
\[
   \psi \circ \gamma(x) 
  = \int_{\gamma({-}{\snull})}^{\gamma(x)} P(t)\,dt + C
  = \ell \int_{{-}{\snull}}^x P(t)\,dt + C
  = \ell \int_{{-}{\snull}}^{{\snull}} P(t)\,dt  + \ell \psi + C (1-\ell) 
\]
So that, letting $C = \frac{\ell}{\ell-1} \int_{{-}{\snull}}^{{\snull}} P(t)\,dt > 0$
ensures $\psi \circ \gamma = \ell \psi $ as required.  Then
$\varphi := \frac{\ln \psi}{\ln(\ell)}$ is of class $\Ce^1$, strictly
increasing.

If additionally $\gamma'$ decreases towards $\ell$ at infinity, a new
lecture of the above growth rate of $(x_n)$ shows that
$\limsup \tfrac{\ell^n}{x_n} \le 1$ for any $x_0 \ge {-}{\snull}$.  Therefore,
the (termwise differentiated product $P$) yields a series
\[
\sum_n \gamma''(x_n) \Bigl(\prod_{j=0}^{n-1} \gamma'(x_j)\Bigr)  \Bigl(\prod_{k\not=n}   \frac{ \gamma'(x_n ) }{\ell}\Bigr)
\]
that normally on $[{-}{\snull}, \infty)$. We infer that $P$ is of class
$\Ce^1$, hence $\psi$ and $\varphi$ of class $\Ce^2$.
\end{proof}

In the situation that $\lim s'(t) = \cals = 0$ and hence
$\lim \gamma'(t) = 1$ things are more delicate. If 
$\gamma$ is such that $\gamma'(x) = 1 + \LANDAUo(x^{-\delta})$ at infinity,
for all $x, y$, 
\[
\lim_{n \to \infty}  \frac{ \gamma^{(n+1)}(x) - \gamma^{(n)}(x) }{ \gamma^{(n+1)}(y) - \gamma^{(n)}(y) }
= 1.
\]
We leave the proof as exercise, as it is a modification of \cite[Lemma 7.3]{Kuczma}. Consequently, whenever
\[
\varphi(x) := \lim_{n \to \infty}  \frac{ \gamma^{(n)}(x) - \gamma^{(n)}(x_0) }{ \gamma^{(n+1)}(x_0) - \gamma^{(n)}(x_0) }
\]
exists, $\varphi$ is a solution to Abel's equation (\ref{eq:abels-equation}). This is
the P. Lévy's algorithm, see e.g.  \cite[Chapter VII]{Kuczma}.  In
order to ensure existence of a solution we will in general have to get
a finer control of the asymptotics. The next result in this direction
is based on ideas of Szekeres \cite[Theorem~1c]{Szekeres.acta.1958},
see also \cite[Theorem~7.2]{Kuczma}). The principal idea is similar to
Theorem~\ref{thm:abel-eq-1}, but we have to transform differently and 
to be more careful how to construct an infinite product.

\begin{theorem}\label{thm:abel-eq-2}
  If  $\gamma'(x) = 1 + a (1-\delta)x^{-\delta} + \LANDAUo(x^{-\delta})$
  at infinity, where $a>0$ and $\delta > 0$, $\delta\not=1$, then Abel's equation
  (\ref{eq:abels-equation}) has a strictly positive and strictly
  increasing $\Ce^1$-solution $\varphi$.
\end{theorem}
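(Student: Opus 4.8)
The plan is to descend to the derivative. Differentiating (\ref{eq:abels-equation}) shows that a strictly increasing $\Ce^1$-solution $\varphi$ exists exactly when the multiplicative equation
\[
   w(x) \;=\; \gamma'(x)\, w(\gamma(x))
\]
admits a strictly positive continuous solution $w$, for then $\varphi(x)=\int_{-{\snull}}^x w$ recovers the Abel function. In Theorem~\ref{thm:abel-eq-1} this was solved by the product $\prod_n \gamma'(\gamma^{(n)}(x))/\ell$; here $\ell=1$, and along an orbit $x_n=\gamma^{(n)}(x)$ the factor $\gamma'(x_n)-1$ is of the critical order $x_n^{-\delta}$, so that bare product diverges. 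The remedy is to divide by the formal asymptotic solution. A dominant-balance computation in (\ref{eq:abels-equation}) gives $\varphi(x)\sim\tfrac1{a\delta}x^{\delta}$, hence $w(x)\sim\tfrac1a x^{\delta-1}$, and the coefficient $a(1{-}\delta)$ in the hypothesis is precisely the one for which the model $r(x):=x^{\delta-1}$ satisfies $\gamma'(x)\,r(\gamma(x))/r(x)=1$ to leading order. This is the transformation the paper announces as ``different'' from the $\ell>1$ case.

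Concretely, I would first record that $\gamma$ has no fixed point (else $s$ would vanish) and $\gamma(-{\snull})={\snull}$, so $\gamma(x)>x$ and every orbit $x_n=\gamma^{(n)}(x)$ escapes to $+\infty$. Comparing the recursion $x_{n+1}-x_n=\gamma(x_n)-x_n\sim a\,x_n^{1-\delta}$ with the differential equation $\dot y=a\,y^{1-\delta}$ yields the rate $x_n\sim(a\delta n)^{1/\delta}$, whence $x_n^{-\delta}\sim(a\delta n)^{-1}$ (harmonic, divergent) while $x_n^{-2\delta}\sim(a\delta n)^{-2}$ is summable. I would then construct $w$ for large $x$ as the renormalised product
\[
   w(x) \;=\; r(x)\prod_{n=0}^{\infty}\frac{\gamma'(x_n)\,r(x_{n+1})}{r(x_n)},
\]
extend it to all of $[-{\snull},\infty)$ by $w(x)=\bigl(\prod_{j=0}^{k-1}\gamma'(\gamma^{(j)}(x))\bigr)\,w(\gamma^{(k)}(x))$ for $k$ large enough that $\gamma^{(k)}(x)$ enters the region already treated (consistency following from the defining equation), and note that $w$ is continuous and strictly positive because $\gamma'>0$.

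To finish, I would set $\varphi(x)=\int_{-{\snull}}^{x} w(t)\,dt$. The multiplicative equation gives $\tfrac{d}{dx}\bigl(\varphi(\gamma(x))-\varphi(x)\bigr)=\gamma'(x)\,w(\gamma(x))-w(x)=0$, so $\varphi\circ\gamma-\varphi$ is a constant, necessarily positive since $w>0$ and $\gamma(x)>x$; rescaling $w$ by that constant normalises it to $1$ and establishes (\ref{eq:abels-equation}). Adding a suitable additive constant (which preserves (\ref{eq:abels-equation})) makes $\varphi$ strictly positive, $w>0$ makes $\varphi$ strictly increasing and of class $\Ce^1$, and $\int^{\infty}x^{\delta-1}\,dx=\infty$ forces $\varphi\to\infty$.

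The main obstacle is the convergence of the product, which is genuinely borderline. A Taylor expansion of the general factor shows that the leading $x_n^{-\delta}$ contributions of $\gamma'(x_n)$ and of $r(x_{n+1})/r(x_n)$ cancel, leaving $1+\LANDAUo(x_n^{-\delta})$; but since $x_n^{-\delta}\sim c/n$ along the orbit, the crude sum $\sum_n \LANDAUo(x_n^{-\delta})$ need not converge, so the estimate used for $\ell>1$ breaks down here. Upgrading this to genuine $\LANDAUO(x_n^{-2\delta})$ control, whose sum behaves like $\sum_n n^{-2}$, is the heart of the matter: I would extract it from the (eventual) monotonicity of $\gamma'$ via the telescoping bound $\sum_n|\gamma'(x_{n+1})-\gamma'(x_n)|\le|\gamma'(\infty)-\gamma'(x_0)|<\infty$, combined with the Kuczma-type fact that the increments $\gamma^{(n+1)}(x)-\gamma^{(n)}(x)$ of nearby starting points stay comparable (the lemma quoted before the statement, \cite[Lemma 7.3]{Kuczma}). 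This refined product estimate, replacing the naive one of Theorem~\ref{thm:abel-eq-1}, is exactly the ``more careful'' construction the paper points to, and where the hypothesis $\delta\neq1$ is used to keep the model $r(x)=x^{\delta-1}$ nondegenerate.
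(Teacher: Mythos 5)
Your proposal follows essentially the same route as the paper's proof: you renormalise the (divergent, since $\ell=1$) product $\prod_n \gamma'(\gamma^{(n)}(x))$ by the model $r(x)=x^{\delta-1}$ for $\varphi'$ (the paper uses $g(x)=\gamma(x)^{1-\delta}$, producing the identical general factor $\gamma'(x_n)\bigl(x_n/\gamma(x_n)\bigr)^{1-\delta}$ up to an immaterial index shift), you establish the orbit asymptotics $x_n\sim (a\delta n)^{1/\delta}$ (the paper via Ces\`aro means of $x_{n+1}^\delta-x_n^\delta\to a\delta$, you via comparison with $\dot y=a y^{1-\delta}$; same conclusion), and you integrate the resulting positive solution of $w=\gamma'\cdot(w\circ\gamma)$ and normalise the additive constant, exactly as the paper does with $\varphi(x)=C\int P(t)\gamma(t)^{\delta-1}\,dt$.

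The one point of genuine divergence is the convergence of the renormalised product, and there your caution is warranted but your proposed repair is not yet a proof. The paper expands $1-\gamma'(x)(x/\gamma(x))^{1-\delta}=a^2(1-\delta)^2x^{-2\delta}+r(x)$ and declares $r=o(x^{-2\delta})$; you correctly observe that the hypothesis only supplies remainders of size $o(x^{-\delta})$, so that after the leading-order cancellation the general factor is a priori only $1+o(x_n^{-\delta})=1+o(1/n)$, which is not summable. However, your fix --- eventual monotonicity of $\gamma'$ plus the telescoping bound $\sum_n|\gamma'(x_{n+1})-\gamma'(x_n)|<\infty$ --- invokes a hypothesis that is not in the statement, and even granted it, summability of the increments of $\gamma'$ along the orbit does not by itself convert the $o(x_n^{-\delta})$ error in the factor into the needed $O(x_n^{-2\delta})$ (or at least summable) control. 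So as written this step remains a gap in your argument; to be fair, it is precisely the step at which the paper's own proof is thinnest, since its assertion $r=o(x^{-2\delta})$ likewise does not follow from error terms that are merely $o(x^{-\delta})$, and a fully rigorous treatment would have to import the regular-iteration machinery of Szekeres/Kuczma that both you and the paper gesture towards.
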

\begin{proof}
  First observe that
  $\frac{\gamma(x)}{x} = 1 + a x^{-\delta} + \LANDAUo(x^{-\delta})$, by integrating $\gamma'$ on $[0, x]$ or $[x, \infty)$ according to $\delta< 1$ or $\delta > 1$.
%
%
  First we transform our problem into a multiplicative version. To
  this end, let $g: [{-}{\snull}, \infty) \to (0, \infty)$ be a
  $\Ce^1$-function.  Then, whenever $\varphi$ solves Abel's equation
  (\ref{eq:abels-equation}), $\psi(x) = g(x) \varphi'(x)$ satisfies
\[
   (\psi\circ \gamma)(x)  
=   g(\gamma(x))  \varphi'(\gamma(x)) 
=   g(\gamma(x)) \frac{\varphi'(x)}{\gamma'(x)}
=   \frac{g(\gamma(x))}{g(x) \gamma'(x)}   \psi(x) = : m(x) \psi(x)
\]
Let $x_n = \gamma^{(n)}(x)$. If $(x_n)$ were bounded, it would
converge to a fixed point of $\gamma$ --- but there is none. So
$x_n \to + \infty$. Assume that we chose the function $g$ such that
\begin{equation}
  \label{eq:sum-test}
   \sum_n \left| \frac{g(x_n) \gamma'(x_n)}{g(x_{n+1})}  - 1 \right|
\end{equation}
converges uniformly on compact intervals. Then the infinite product
\begin{equation}  \label{eq:infinite-product}
   P(x)  
=  \prod_{n=0}^\infty \tfrac{1}{m (\gamma^{(n)}(x) )}
=  \prod_{n=0}^\infty  \frac{g(x_n) \gamma'(x_n)}{g(x_{n+1})} ,
\end{equation}
defines a continuous function $P$ that solves
$\psi\circ \gamma = m \cdot \psi$.  From $P$ we then easily regain
$\varphi$. We chose $g(x) = \gamma(x)^{1-\delta}$. Then $P(x)>0$ for
all $x$. Moreover we have the following asymptotics for
$x \to \infty$:
\begin{align*}
1 \; - \; \gamma'(x) \left( \frac{x}{\gamma(x)} \right)^{1{-}\delta} 
= & \; 
 1 \; - \; \tfrac{1}{\left(1 + a x^{-\delta} + r_1(x) \right)^{1{-}\delta} }
 \left(1 + a (1{-}\delta)x^{-\delta} + \widetilde{r_1}(x) \right) \\
= & \; 
1 \; - \; \left(1 - a (1{-}\delta) x^{-\delta} + r_2(x)\right) \left( 1 + a(1{-}\delta) x^{-\delta} + \widetilde{r_2}(x)\right) \\
= & a^2 (1{-}\delta)^2 x^{-2\delta} + r(x).
\end{align*}
where
$r_1, r_2, \widetilde{r_1} \widetilde{r_2} = \LANDAUo(x^{-\delta})$
and $r = \LANDAUo(x^{-2\delta})$ for $x \to \infty$. Next, we need a
growth rate for the orbits $x_n = \gamma^{(n)}(x_0)$: Observe that
$a = \lim_{n \to \infty} \frac{\gamma(x_n) - x_n}{x_n^{1-\delta}} = \lim_{n\to \infty }\frac{ x_{n+1} - x_n }{x_n^{1-\delta}}$. Rewriting the right hand side we obtain
\[
a = \lim_{n\to \infty}  (x_n^\delta - x_{n+1}^\delta)\left( \tfrac{x_{n+1}}{x_n} \right)^{-\delta}  \frac{ \frac{x_{n+1}}{x_n} - 1 } { \left(\frac{x_{n+1}}{x_n}\right)^{-\delta} - 1 }.
\]
Using $\frac{x_{n+1}}{x_n} = \frac{ \gamma(x_n) }{x_n} \to 1$ as
$n \to \infty$ the last fraction has limit $-\nicefrac{1}{\delta}$ and
we obtain
\[
\delta a = \lim_{n\to \infty}  (x_{n+1}^\delta - x_n^\delta).
\]
Taking Cesaro sums, 
\[
\delta a = \lim_{n\to \infty} \frac1n \sum_{j=0}^{n-1}
(x_{j+1}^\delta- x_j^\delta) = \lim_{n\to \infty} \frac1n x_n^\delta.
\]
We infer finally $x_n \sim c \, n^{\nicefrac{1}{\delta}}$ when $n \to \infty$.
Putting both parts together, 
\[
\left| \frac{g(x_n) \gamma'(x_n)}{g(x_{n+1})}  - 1 \right|
=  a^2 (1{-}\delta)^2 x_n^{-2\delta} + r(x_n)
=  a^2 (1{-}\delta)^2 n^{-2} + r(x_n)
\]
where $r(x_n) = \LANDAUo(n^{-2})$. Therefore (\ref{eq:sum-test})
converges absolutely and uniformly on compact intervals so that
(\ref{eq:infinite-product}) converges to a strictly positive function
$P$. For $C>0$ to be determined in a moment, we let
\[
   \varphi(x) := C \int_{\gammaofminusone}^x \frac{P(t)}{\gamma(t)^{1-\delta}}\,dt.
\]
$P$ and $\gamma$ being strictly positive, $\varphi$ is positive,
strictly increasing and of class $\Ce^1$. Moreover,
\begin{align*}
\varphi(\gamma(x))  =  & \; C \int_{\gamma(-\snull)}^{\gamma(x)} \frac{P(t)}{\gamma(t)^{1-\delta}}\,dt \;  
  =  \; C \int_{{-}{\snull}}^{x} \frac{P(\gamma(s))}{\gamma(\gamma(s))^{1-\delta}} \gamma'(s)\,ds \\
 = & \; C \int_{{-}{\snull}}^{x} \frac{P(s) m(s) }{\gamma(\gamma(s))^{1-\delta}} \gamma'(s)\,ds \; 
  =  \; C \int_{{-}{\snull}}^{x} \frac{P(t)}{\gamma(t)^{1-\delta}}\,dt\\
 = & \; \varphi(x) + C  \int_{{-}{\snull}}^{\gammaofminusone} \frac{P(t)}{\gamma(t)^{1-\delta}}\,dt,
\end{align*}
so that adjusting $C$ (the integral being strictly positive) we obtain a
solution of Abel's equation (\ref{eq:abels-equation}).
\end{proof}

\subsubsection*{Acknowledgement} Both authors are indebted to Marius
Tucsnak for suggesting questions that lead us to find
Theorem~\ref{thm:obs-on-internal-point}.

\def\SUBMITTED{Submitted}
\def\TOAPPEAR{To appear in }
\def\PREPARATION{In preparation }

\nocite{*}

\providecommand{\bysame}{\leavevmode\hbox to3em{\hrulefill}\thinspace}


\begin{thebibliography}{10}

\bibitem{AmmariBchatniaMufti}
Ka{\"{\i}}s Ammari, Ahmed Bchatnia, and Karim El~Mufti, \emph{Stabilization of
  the nonlinear damped wave equation via linear weak observability}, NoDEA
  Nonlinear Differential Equations Appl. \textbf{23} (2016), no.~2, Art. 6, 18.

\bibitem{Balazs}
Nandor~L Balazs, \emph{On the solution of the wave equation with moving
  boundaries}, Journal of Mathematical Analysis and Applications \textbf{3}
  (1961), no.~3, 472 -- 484.

\bibitem{BardosChen}
Claude Bardos and Goong Chen, \emph{Control and stabilization for the wave
  equation. {III}. {D}omain with moving boundary}, SIAM J. Control Optim.
  \textbf{19} (1981), no.~1, 123--138.

\bibitem{BardosLebeauRauch}
Claude Bardos, Gilles Lebeau, and Jeffrey Rauch, \emph{Sharp sufficient
  conditions for the observation, control, and stabilization of waves from the
  boundary}, SIAM J. Control Optim. \textbf{30} (1992), no.~5, 1024--1065.

\bibitem{CannarsaDePratoZolesio}
Piermarco Cannarsa, Giuseppe Da~Prato, and Jean-Paul Zol{\'e}sio,
  \emph{Evolution equations in noncylindrical domains}, Atti Accad. Naz. Lincei
  Rend. Cl. Sci. Fis. Mat. Natur. (8) \textbf{83} (1989), 73--77 (1990).

\bibitem{Castro:rapidly}
C.~Castro, \emph{Boundary controllability of the one-dimensional wave equation
  with rapidly oscillating density}, Asymptot. Anal. \textbf{20} (1999),
  no.~3-4, 317--350.

\bibitem{Castro:moving-interior}
Carlos Castro, \emph{Exact controllability of the 1-{D} wave equation from a
  moving interior point}, ESAIM Control Optim. Calc. Var. \textbf{19} (2013),
  no.~1, 301--316.

\bibitem{CastroCindeaMunch}
Carlos Castro, Nicolae C{\^{\i}}ndea, and Arnaud M{\"u}nch,
  \emph{Controllability of the linear one-dimensional wave equation with inner
  moving forces}, SIAM J. Control Optim. \textbf{52} (2014), no.~6, 4027--4056.

\bibitem{CooperStrauss}
Jeffery Cooper and Walter~A. Strauss, \emph{Energy boundedness and decay of
  waves reflecting off a moving obstacle}, Indiana Univ. Math. J. \textbf{25}
  (1976), no.~7, 671--690.

\bibitem{Corbett:thesis}
Norman~C. Corbett, \emph{Initial moving-boundary value problems associated with
  the wave equation}, Ph.D. thesis, University of Manitoba, 1991.

\bibitem{Corbett:symmerty}
\bysame, \emph{A symmetry approach to an initial moving boundary value problem
  associated with the wave equation}, Can. Appl. Math. Q. \textbf{18} (2010),
  no.~4, 351--360.

\bibitem{Cui-Liu-Gao}
Lizhi Cui, Xu~Liu, and Hang Gao, \emph{Exact controllability for a
  one-dimensional wave equation in non-cylindrical domains}, Journal of
  Mathematical Analysis and Applications \textbf{402} (2013), no.~2, 612 --
  625.

\bibitem{Dodonov}
Viktor Dodonov, \emph{Modern nonlinear optics, part 1}, 2nd edition ed., vol.
  119, ch.~Nonstationary {C}asimir effect and analytical solutions for quantum
  fields in cavities with moving boundaries, pp.~309--394, Wiley,New York,
  2002.

\bibitem{EngelNagel}
Klaus-Jochen Engel and Rainer Nagel, \emph{One-parameter semigroups for linear
  evolution equations}, Graduate Texts in Mathematics, vol. 194,
  Springer-Verlag, New York, 2000, With contributions by S. Brendle, M.
  Campiti, T. Hahn, G. Metafune, G. Nickel, D. Pallara, C. Perazzoli, A.
  Rhandi, S. Romanelli and R. Schnaubelt.

\bibitem{FabrePuel}
Caroline Fabre and Jean-Pierre Puel, \emph{Pointwise controllability as limit
  of internal controllability for the wave equation in one space dimension},
  Portugal. Math. \textbf{51} (1994), no.~3, 335--350.

\bibitem{Gaffour}
L.~Gaffour, \emph{Analytical method for solving the one-dimensional wave
  equation with moving boundary}, Journal of Electromagnetic Waves and
  Applications \textbf{12} (1998), 1429--1430.

\bibitem{GaffourGrigorian}
L.~Gaffour and G.~Grigorian, \emph{Circular waveguide of moving boundary},
  Journal of Electromagnetic Waves and Applications \textbf{10} (1996), no.~1,
  97--108.

\bibitem{JaffardTucsnakZuazua}
St{\'e}phane Jaffard, Marius Tucsnak, and Enrique Zuazua, \emph{Singular
  internal stabilization of the wave equation}, J. Differential Equations
  \textbf{145} (1998), no.~1, 184--215.

\bibitem{Khapalov:2001}
A.~Y. Khapalov, \emph{Observability and stabilization of the vibrating string
  equipped with bouncing point sensors and actuators}, Math. Methods Appl. Sci.
  \textbf{24} (2001), no.~14, 1055--1072.

\bibitem{Khapalov:1995}
A.~Yu. Khapalov, \emph{Controllability of the wave equation with moving point
  control}, Appl. Math. Optim. \textbf{31} (1995), no.~2, 155--175.

\bibitem{Kuczma}
Marek Kuczma, \emph{Functional equations in a single variable}, Monografie
  Matematyczne, Tom 46, Pa\'nstwowe Wydawnictwo Naukowe, Warsaw, 1968.

\bibitem{KuczmaChoczewskiGer}
Marek Kuczma, Bogdan Choczewski, and Roman Ger, \emph{Iterative functional
  equations}, Encyclopedia of Mathematics and its Applications, vol.~32,
  Cambridge University Press, Cambridge, 1990.

\bibitem{Lions}
J.~L. Lions, \emph{Control and estimation in distributed parameter systems
  (frontiers in applied mathematics)}, ch.~Pointwise Control for Distributed
  Systems, Society for Industrial and Applied Mathematics, 1987.

\bibitem{Lions:}
J.-L. Lions, \emph{Contr\^olabilit\'e exacte, perturbations et stabilisation de
  syst\`emes distribu\'es. {T}ome 1}, Recherches en Math\'ematiques
  Appliqu\'ees [Research in Applied Mathematics], vol.~8, Masson, Paris, 1988,
  Contr{\^o}labilit{\'e} exacte. [Exact controllability], With appendices by E.
  Zuazua, C. Bardos, G. Lebeau and J. Rauch.

\bibitem{LuLiChenYao}
Liqing Lu, Shengjia Li, Goong Chen, and Pengfei Yao, \emph{Control and
  stabilization for the wave equation with variable coefficients in domains
  with moving boundary}, Systems Control Lett. \textbf{80} (2015), 30--41.

\bibitem{Miranda}
Manuel Milla~Miranda, \emph{Exact controllability for the wave equation in
  domains with variable boundary}, Rev. Mat. Univ. Complut. Madrid \textbf{9}
  (1996), no.~2, 435--457.

\bibitem{Moore}
Gerald~T. Moore, \emph{Quantum {T}heory of the {E}lectromagnetic {F}ield in a
  variable-length one-dimensional {C}avity}, Journal of Mathematical Physics
  \textbf{11} (1970), no.~9, 2679--2691.

\bibitem{Myint:PDE}
T.~Myint-U and L.~Debnath, \emph{Linear partial differential equations for
  scientists and engineers}, Birkh{\"a}user Boston, 2007.

\bibitem{Nicolai}
E.L. Nicolai, \emph{On transverse vibrations of a portion of a string of
  uniformly variable length}, Annals Petrograd Polytechn. Inst. \textbf{28}
  (1921), 329--343.

\bibitem{Russell}
David~L. Russell, \emph{Exact boundary value controllability theorems for wave
  and heat processes in star-complemented regions}, Differential games and
  control theory ({P}roc. {NSF}---{CBMS} {R}egional {R}es. {C}onf., {U}niv.
  {R}hode {I}sland, {K}ingston, {R}.{I}., 1973), Dekker, New York, 1974,
  pp.~291--319. Lecture Notes in Pure Appl. Math., Vol. 10.

\bibitem{SunLiLu}
Haicong Sun, Huifen Li, and Liqing Lu, \emph{Exact controllability for a string
  equation in domains with moving boundary in one dimension}, Electron. J.
  Differential Equations (2015), No. 98, 7.

\bibitem{Szekeres.acta.1958}
G.~Szekeres, \emph{Regular iteration of real and complex functions}, Acta
  Mathematica \textbf{100} (1958), no.~3, 203--258.

\bibitem{Yao}
Peng-Fei Yao, \emph{On the observability inequalities for exact controllability
  of wave equations with variable coefficients}, SIAM J. Control Optim.
  \textbf{37} (1999), no.~5, 1568--1599 (electronic).

\bibitem{Zuazua:1Dwave}
E.~Zuazua, \emph{Exact controllability for semilinear wave equations in one
  space dimension}, Ann. Inst. H. Poincar\'e Anal. Non Lin\'eaire \textbf{10}
  (1993), no.~1, 109--129.

\end{thebibliography}
\end{document}